\theoremstyle{plain}
\newtheorem{thm}{Theorem}
\newtheorem{prop}{Proposition}
\newtheorem{cor}{Corollary}
\newtheorem{bem}{Remark}
\providecommand{\sm}{\setminus}
\providecommand{\N}{\mathbb{N}}
\providecommand{\R}{\mathbb{R}}
\providecommand{\eps}{\varepsilon}
\providecommand{\dx}{\,dx}
\providecommand{\wto}{\rightharpoonup}
\providecommand{\skp}[2]{\langle#1,#2\rangle}
\DeclareMathOperator{\supp}{supp}
\renewcommand{\qed}{\hfill $\Box$}
\begin{document}

\allowdisplaybreaks

\title{Minimal energy solutions for repulsive nonlinear Schr\"odinger~systems}

\author{Rainer Mandel}
\address{R. Mandel \hfill\break
Department of Mathematics, Karlsruhe Institute of Technology (KIT)\hfill\break
D-76128 Karlsruhe, Germany}
\email{Rainer.Mandel@kit.edu}
\date{15.03.2013}

\subjclass[2000]{Primary: 35J50, 35J57}
\keywords{Variational methods for elliptic systems}

\begin{abstract}
  In this paper we establish existence and nonexistence results concerning fully nontrivial minimal energy
  solutions of the nonlinear Schr\"odinger system
  \begin{align*}
     \begin{gathered}
       -\Delta u + ~~\, u = |u|^{2q-2}u + b|u|^{q-2}u|v|^q \quad\text{in }\R^n, \\
       -\Delta v + \omega^2 v = |v|^{2q-2}v + b|u|^q|v|^{q-2}v\quad\text{in }\R^n.
     \end{gathered}
  \end{align*}
  We consider the repulsive case $b<0$ and assume that the exponent $q$ satisfies $1<q<\frac{n}{n-2}$ in case
  $n\geq 3$ and $1<q<\infty$ in case $n=1$ or $n=2$. For space dimensions $n\geq 2$ and arbitrary $b<0$ we prove the
  existence of fully nontrivial nonnegative solutions which converge to a solution of some optimal partition
  problem as $b\to -\infty$. In case $n=1$ we prove that minimal energy solutions exist provided the coupling
  parameter $b$ has small absolute value whereas fully nontrivial solutions do not exist if $1<q\leq 2$ and
  $b$ has large absolute value.
\end{abstract}

\maketitle

 \section{Introduction}

 \parindent0mm

 In this paper we are interested in fully nontrivial minimal energy solutions of the system
  \begin{align}\label{Mbrad Gl DGL}
     \begin{gathered}
       -\Delta u + ~~\, u = |u|^{2q-2}u + b|u|^{q-2}u|v|^q \quad\text{in }\R^n, \\
       -\Delta v + \omega^2 v = |v|^{2q-2}v + b|u|^q|v|^{q-2}v\quad\text{in }\R^n
     \end{gathered}
  \end{align}
  for parameter values $\omega\geq 1$ and $b\leq 0$. We henceforth assume that the exponent $q$ satisfies
  $1<q<\frac{n}{n-2}$ when $n\geq 3$ and $1<q<\infty$ when $n=1$ or $n=2$.
  For applications in physics the special case $q=2$ and $n\in\{1,2,3\}$ is of particular
  importance. For example, in photonic crystals the system \eqref{Mbrad Gl DGL} is used to describe the
  approximate shape of so-called band gap solitons which are special nontrivial solitary wave solutions $E(x,t)=e^{-i\kappa
  t}\phi(x)$ of the time-dependent nonlinear Schr\"odinger equation (or Gross-Pitaevski
  equation)
  $$
    i\partial_t E = -\Delta E + V(x) E - |E|^2 E \qquad\text{in }[0,\infty)\times \R^n.
  $$
  For a detailed exposition on that matter we refer to \cite{DoUe_Coupled_mode_equations}.
  \medskip

  During the last ten years many authors contributed to a better unterstanding of such nonlinear
  Schr\"{o}dinger systems and various interesting results concerning the existence of nontrivial solutions
  have been proved using Ljusternik-Schnirelman theory~\cite{WeWe_Radial_solutions_and}, constrained minimization methods
  \cite{AmbCol_Bound_and_Ground},\cite{DeFLop_Solitary_waves_for},\cite{MaMoPe_Positive_solutions_for},\cite{Sir_Least_energy_solutions},\cite{WeWe_Nonradial_symmetric_bound}
  or bifurcation theory \cite{BaDaWa_A_Liouville_Theorem}. In the case of a positive coupling parameter $b$
  many existence results for positive solutions of \eqref{Mbrad Gl DGL} have been proved by investigations of
  appropriate constrained minimization problems. For instance Maia, Montefusco, Pellacci
  \cite{MaMoPe_Positive_solutions_for} proved the existence of nonnegative ground states of \eqref{Mbrad Gl DGL} which, by definition, are solutions of
  minimal energy among all nontrivial solutions. Here, the energy corresponds to the Euler functional~$I$
  associated to \eqref{Mbrad Gl DGL} which is given by
  $$
    I(u,v)
    = \frac{1}{2} \big( \|u\|^2 + \|v\|_\omega^2 \big)
      - \frac{1}{2q} \big(\|u\|_{2q}^{2q} + \|v\|_{2q}^{2q} +2b\|uv\|_q^q \big)
  $$
  where $\|\cdot\|_{2q},\|\cdot\|_q$ denote Lebesgue norms and $\|\cdot\|,\|\cdot\|_\omega$ denote
  Sobolev space norms that we will define in \eqref{Gl Defn norms}. Moreover the authors gave
  sufficient conditions and necessary conditions for ground states to be positive in both components which basically require the coupling parameter $b$ to be positive and
  sufficiently large. In the special case $q=2$ additional sufficient conditions for the existence of positive
  ground states have been proved in \cite{AmbCol_Bound_and_Ground},\cite{DeFLop_Solitary_waves_for}. Furthermore, for
  $q=2$ and small positive values of $b$ Lin, Wei
  \cite{LiWe_Ground_states_of},\cite{LiWe_Erratum_ground_states} and Sirakov~\cite{Sir_Least_energy_solutions}
  proved the existence of positive solutions which have minimal energy among all fully nontrivial solutions.
  From a technical point of view the approaches followed in \cite{MaMoPe_Positive_solutions_for} and
  \cite{Sir_Least_energy_solutions},\cite{LiWe_Ground_states_of},\cite{LiWe_Erratum_ground_states}
  differ in the following way. In \cite{MaMoPe_Positive_solutions_for} ground states are obtained by
  minimizing the Euler functional~$I$ over the entire Nehari manifold
  $$
    \mathcal{N}_b
    = \Big\{ (u,v):\; u,v\in H^1(\R^n),\;(u,v)\neq (0,0),\;\|u\|^2+\|v\|_\omega^2 =
    \|u\|_{2q}^{2q}+ \|v\|_{2q}^{2q}+2b\|uv\|_q^q\Big\}
  $$
  whereas the positive solutions found in
  \cite{Sir_Least_energy_solutions},\cite{LiWe_Ground_states_of},\cite{LiWe_Erratum_ground_states} are
  minimizers of $I$ over the subset $\mathcal{M}_b$ of the Nehari manifold which is given by
  \begin{align*}
    \mathcal{M}_b
    &= \Big\{ (u,v):\; u,v\in H^1(\R^n),\; u,v\neq 0,\; \|u\|^2=\|u\|_{2q}^{2q}+b\|uv\|_q^q,
      \;\|v\|_\omega^2 = \|v\|_{2q}^{2q}+b\|uv\|_q^q\Big\}.
  \end{align*}

  When $b$ is negative, however, the analysis of these constrained minimization problems does not produce any
  fully nontrivial solutions. Indeed, for $b<0$ the minimizers of $I|_{\mathcal{N}_b}$ are given by the
  semitrivial solutions $(\pm u_0,0)$ or $(0,\pm u_0)$ (the latter being possible only for $\omega=1$) where
  $u_0$ is the unique positive function satisfying $-\Delta u_0 + u_0 = u_0^{2q-1}$ in $\R^n$, cf.
  \cite{MaMoPe_Positive_solutions_for},\cite{Kwo_Uniqueness_of_positive}.
  Furthermore it is known that $I|_{\mathcal{M}_b}$ does not admit minimizers in case $b<0$, cf.
  Theorem~1 in \cite{LiWe_Ground_states_of}. Therefore the case of negative coupling parameters $b<0$ has to
  be treated differently. In \cite{Sir_Least_energy_solutions} Sirakov considered the minimization problem
  \begin{equation} \label{Mbrad Gl Defn kappabrad}
    \kappa_b^*  := \inf_{\mathcal{M}_b^*} I\quad\text{where }
    \mathcal{M}_b^* = \big\{ (u,v)\in \mathcal{M}_b:\; u,v\text{ are radially symmetric}\big\}
  \end{equation}
  and he proved the existence of a minimizer of $I|_{\mathcal{M}_b^*}$ for parameter
  values $q=2$ and $n\in\{2,3\}$, cf. Theorem 2 (i). Let us note that the indispensable condition $n\geq 2$ is
  missing in the statement of that theorem.
  \medskip

  The aim of this paper is to generalize Sirakov's result to all space dimensions and to the full range of
  superlinear and subcritical exponents. In Theorem \ref{Mbrad Thm Grenzproblem Mbrad} we
  first investigate the case $n\geq 2$. We show that minimizers $(u_b,v_b)$ of the functional
  $I|_{\mathcal{M}_b^*}$ exist and that, at least up to a subsequence, these minimizers of
  $I|_{\mathcal{M}_b^*}$ converge to a function $(\bar u,\bar v)$ with $\bar u\bar v=0$ and
  \begin{equation} \label{Gl DGL bei -unendlich}
    -\Delta \bar u +  \bar u = \bar u^{2q-1} \quad\text{in }\{\bar u\neq 0\},\qquad
    -\Delta \bar v + \omega^2 \bar v = \bar v^{2q-1} \quad\text{in }\{\bar v\neq 0\}
  \end{equation}
  that solves the optimal partition problem
  \begin{equation} \label{Mbrad Gl Defn kappa-inftyrad}
    \kappa_{-\infty}^*  := \inf \Big\{ \frac{1}{2} \big( \|u\|^2 + \|v\|_\omega^2 \big)
      - \frac{1}{2q} \big(\|u\|_{2q}^{2q} + \|v\|_{2q}^{2q} \big) \; : \; (u,v)\in
      \mathcal{M}_{-\infty}^*\Big\}
  \end{equation}
  where the set $\mathcal{M}_{-\infty}^*$ is defined by
  \begin{equation} \label{Mbrad Gl Defn M-infty}
    \mathcal{M}_{-\infty}^*
    = \Big\{ (u,v):\; u,v\in H^1_r(\R^n),\; u,v\neq 0,\; uv\equiv
    0,\;\|u\|^2=\|u\|_{2q}^{2q}, \;\|v\|_\omega^2 = \|v\|_{2q}^{2q}\Big\}. 
  \end{equation}
  Here, $H^1_r(\R^n)$ denotes the space of radially symmetric functions lying in $H^1(\R^n)$. In particular,
  we find that the supports of $u_b,v_b$ separate as $b\to -\infty$. In general bounded domains
  $\Omega\subset\R^n$ these phenomena have been extensively studied in
  \cite{CoFe_Minimal_coexistence},\cite{CoFe_Global_minimizers_of},\cite{CoTeVe_A_variational_problem},\cite{TaTe_Sign-changin_solutions_of}
  and our Theorem \ref{Mbrad Thm Grenzproblem Mbrad} can be considered as one kind of extension of their
  results.
  \medskip

  In case $n=1$, however, the situation turns out to be different. Since the embedding
  $H^1_r(\R^n)\to L^{2q}(\R^n)$ is not compact for $n=1$ the existence of minimizers of
  $I|_{\mathcal{M}_b^*}$ cannot be proved the same way as in the case $n\geq 2$.
  Therefore we approximate the original problem \eqref{Mbrad Gl Defn kappabrad} by the
  corresponding problem on intervals $B_R=(-R,R)$ for large $R>0$. In Theorem~\ref{Mbrad Thm Kompaktheit
  n=1} we show that for negative coupling parameters $b$ with small absolute value the corresponding
  minimizers converge to a minimizer of $I|_{\mathcal{M}_b^*}$ as $R\to\infty$. For negative $b$ with large absolute value,
  however, we prove in Theorem \ref{Mbrad Thm Nichtexistenz n=1 q leq 2} that solutions of \eqref{Mbrad Gl
  DGL} do not exist at least for exponents $1<q\leq 2$.  \medskip

  Let us present the main results of this paper. The first one deals with the case $n\geq 2$.

  \begin{thm} \label{Mbrad Thm Grenzproblem Mbrad}
     Let $n\geq 2,\omega\geq 1$. Then the following holds:
     \begin{itemize} 
       \item[(i)] The value $\kappa_{-\infty}^*$ is attained at a nonnegative fully nontrivial solution of
       \eqref{Gl DGL bei -unendlich}.
       \item[(ii)] For $b\leq 0$ the value $\kappa_b^*$ is attained at a nonnegative fully nontrivial
       solution of \eqref{Mbrad Gl DGL}.
       \item[(iii)] As $b\to -\infty$ we have $\kappa_b^* \to \kappa_{-\infty}^*$ and every sequence of
       minimizers of ${I|_{\mathcal{M}_b^*}}$ has a subsequence
       $(u_b,v_b)$ such that $|b|^{1/q}u_bv_b\to 0$ in $L^q(\R^n)$ and $(u_b,v_b)\to (\bar u,\bar
       v)$ where the latter function is a fully nontrivial solution of \eqref{Gl DGL bei -unendlich} with
       $\bar u\bar v=0$.
     \end{itemize}
  \end{thm}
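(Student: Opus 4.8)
The strategy is to exploit compactness of the embedding $H^1_r(\R^n)\hookrightarrow L^{2q}(\R^n)$ for $n\geq 2$ throughout, and to treat the three parts in the order (ii), (i), (iii), since the argument for (ii) with finite $b$ carries over to the limiting partition problem (i) with only minor changes, and (iii) then compares the two.

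\medskip

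\emph{Part (ii).} First I would record the standard structure of the constrained problem on $\mathcal{M}_b^*$: for every nonzero pair $(u,v)\in H^1_r\times H^1_r$ with $u,v\neq 0$ one can find a unique rescaling $(su,tv)$ landing on $\mathcal{M}_b$, given by solving the $2\times 2$ system obtained from the two Nehari-type identities; for $b\leq 0$ this system is uniquely solvable with $s,t>0$ because the off-diagonal coupling has the favourable sign, and the value $I$ at the projected point equals $\frac{q-1}{2q}$ times the sum of the $L^{2q}$-energies, hence $I|_{\mathcal{M}_b^*}\geq 0$ and a minimizing sequence is bounded in $H^1_r\times H^1_r$. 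Then I would take a minimizing sequence $(u_k,v_k)$, pass to a weakly convergent subsequence $(u_k,v_k)\wto(u_b,v_b)$ in $H^1_r\times H^1_r$, and use the compact embedding to get strong $L^{2q}$ convergence, which also gives $\|u_ku_k\cdot\|$-type terms, i.e. $\|u_kv_k\|_q^q\to\|u_bv_b\|_q^q$ by an elementary interpolation/Hölder estimate $\|uv\|_q\leq \|u\|_{2q}\|v\|_{2q}$. From strong $L^{2q}$ convergence one deduces that the Nehari identities pass to the limit and that $u_b\neq 0$, $v_b\neq 0$: indeed if, say, $u_k\to 0$ in $L^{2q}$ then the first Nehari identity forces $\|u_k\|\to0$, and then the pair degenerates, contradicting a uniform lower bound $\|u_k\|^2\geq c>0$ coming from the Sobolev inequality $\|u_k\|_{2q}^{2q}+b\|u_kv_k\|_q^q=\|u_k\|^2$ together with $b\leq 0$ — wait, for $b\le 0$ this needs a little care, so I would instead argue that $\kappa_b^*>0$ strictly (again via the Sobolev inequality applied to the constraint, which gives $\|u\|^2\leq\|u\|_{2q}^{2q}\leq C\|u\|^{2q}$, hence $\|u\|\geq\delta>0$ on $\mathcal{M}_b^*$, and similarly for $v$), so the weak limit cannot lose either component. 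Weak lower semicontinuity of the norms then gives $I(u_b,v_b)\leq\kappa_b^*$; combined with the fact that $(u_b,v_b)$ (after the unique Nehari-rescaling, which by the limiting identities is the identity or moves it to a point of no larger energy) lies in $\mathcal{M}_b^*$, we get equality, the weak convergence upgrades to strong, and $(u_b,v_b)$ is a minimizer. That it solves \eqref{Mbrad Gl DGL}: a minimizer over $\mathcal{M}_b$ is a critical point of $I$ by the standard Lagrange-multiplier argument on the Nehari-type manifold (the constraint map is submersive because $b\leq 0$ keeps the relevant Jacobian nondegenerate), and minimizing over the radial subspace suffices by the principle of symmetric criticality. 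Finally replace $(u_b,v_b)$ by $(|u_b|,|v_b|)$: this does not change any term in $I$ and stays in $\mathcal{M}_b^*$, so we may assume both components nonnegative, and then the maximum principle gives the solution property on the nodal sets.

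\medskip

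\emph{Part (i).} This is the same argument with the coupling term deleted: on $\mathcal{M}_{-\infty}^*$ one has the decoupled identities $\|u\|^2=\|u\|_{2q}^{2q}$, $\|v\|_\omega^2=\|v\|_{2q}^{2q}$ plus the hard constraint $uv\equiv0$. Boundedness and the positive lower bound $\|u\|,\|v\|\geq\delta$ go through verbatim. The only new point is that the constraint $uv\equiv0$ is closed under $L^{2q}$-convergence along a subsequence with a.e.\ convergence, so the weak $H^1_r$-limit $(\bar u,\bar v)$ again satisfies $\bar u\bar v=0$; strong $L^{2q}$-convergence plus the decoupled Nehari identities then force strong $H^1$-convergence exactly as before, and $(\bar u,\bar v)\in\mathcal{M}_{-\infty}^*$ attains $\kappa_{-\infty}^*$. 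The PDE \eqref{Gl DGL bei -unendlich} on the open sets $\{\bar u\neq0\}$, $\{\bar v\neq0\}$ follows by making variations supported in those open sets (where the constraint $\bar u\bar v=0$ is locally inactive), and nonnegativity again by replacing with absolute values.

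\medskip

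\emph{Part (iii).} For the energy limit: since $\mathcal{M}_b^*\supset$ (after Nehari-rescaling) the pairs built from elements of $\mathcal{M}_{-\infty}^*$ — for $(u,v)\in\mathcal{M}_{-\infty}^*$ we have $uv\equiv0$ so $\|uv\|_q=0$ and the pair already sits in $\mathcal{M}_b^*$ for \emph{every} $b$ — we get $\kappa_b^*\leq\kappa_{-\infty}^*$ for all $b\leq0$. Conversely, let $(u_b,v_b)$ be minimizers; they are bounded in $H^1_r\times H^1_r$ uniformly in $b$ (the energy bound $\kappa_b^*\leq\kappa_{-\infty}^*$ plus $b\leq 0$ controls $\|u_b\|^2+\|v_b\|_\omega^2=\|u_b\|_{2q}^{2q}+\|v_b\|_{2q}^{2q}+2b\|u_bv_b\|_q^q\leq\|u_b\|_{2q}^{2q}+\|v_b\|_{2q}^{2q}$, and these are in turn bounded via Sobolev once we know $\|u_b\|^2+\|v_b\|_\omega^2$ is bounded — so one runs the usual bootstrap: set $A_b=\|u_b\|^2+\|v_b\|_\omega^2$, then $A_b\leq C A_b^{q}$ is false in that direction; instead use $\frac{q-1}{2q}(\|u_b\|_{2q}^{2q}+\|v_b\|_{2q}^{2q})=I(u_b,v_b)+\frac{b}{q}\|u_bv_b\|_q^q\leq\kappa_{-\infty}^*$ since $b\le 0$, which bounds the $L^{2q}$-norms, hence bounds $A_b$, hence bounds the $H^1$-norms). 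From $-2b\|u_bv_b\|_q^q=\|u_b\|_{2q}^{2q}+\|v_b\|_{2q}^{2q}-A_b$ and the two-sided bounds just obtained, $|b|\,\|u_bv_b\|_q^q$ is bounded, so $\|u_bv_b\|_q^q=O(|b|^{-1})\to0$; since $u_bv_b\geq0$, this is exactly $|b|^{1/q}u_bv_b\to0$ in $L^q$. Extract a subsequence with $(u_b,v_b)\wto(\bar u,\bar v)$ weakly in $H^1_r\times H^1_r$ and strongly in $L^{2q}\times L^{2q}$ and a.e.; then $\|\bar u\bar v\|_q=\lim\|u_bv_b\|_q=0$, so $\bar u\bar v\equiv0$. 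Passing to the limit in the (uncoupled part of the) Nehari identities and using $\|u_bv_b\|_q^q\to0$ gives $\|\bar u\|^2=\|\bar u\|_{2q}^{2q}$ and $\|\bar v\|_\omega^2=\|\bar v\|_{2q}^{2q}$ provided $\bar u,\bar v\neq0$; the positive lower bounds $\|u_b\|,\|v_b\|\geq\delta$ (uniform in $b$, from Sobolev applied to the constraint as in (ii)) guarantee $\bar u,\bar v\neq0$. Hence $(\bar u,\bar v)\in\mathcal{M}_{-\infty}^*$ and by weak lower semicontinuity $\kappa_{-\infty}^*\leq I(\bar u,\bar v)-0\leq\liminf_b I(u_b,v_b)=\liminf_b\kappa_b^*\leq\kappa_{-\infty}^*$. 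Therefore $\kappa_b^*\to\kappa_{-\infty}^*$, all the inequalities are equalities, which forces $\|u_b\|\to\|\bar u\|$ and $\|v_b\|_\omega\to\|\bar v\|_\omega$, i.e.\ strong $H^1$-convergence, and $(\bar u,\bar v)$ is a minimizer for $\kappa_{-\infty}^*$, hence by part (i)'s argument a fully nontrivial nonnegative solution of \eqref{Gl DGL bei -unendlich} with $\bar u\bar v=0$.

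\medskip

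\emph{Main obstacle.} The delicate point is ruling out vanishing of a component in the limit — i.e.\ proving the uniform lower bound $\|u_b\|,\|v_b\|\geq\delta>0$ — precisely because $b<0$ makes the coupling term in the constraint have the \emph{wrong} sign for the naive estimate; one must use that on the manifold $\|u\|^2=\|u\|_{2q}^{2q}+b\|uv\|_q^q\leq\|u\|_{2q}^{2q}$ and then Sobolev, being careful that this only bounds $\|u\|$ below, not above, so the upper a-priori bound has to come from the energy side as sketched. The second mildly technical point is the justification that minimizers of $I|_{\mathcal{M}_b^*}$ really solve the PDE, which needs the Nehari-type constraint manifold to be a genuine $C^1$-manifold near the minimizer (nondegeneracy of the $2\times2$ Jacobian of the constraint map, which holds for $b\leq0$) combined with symmetric criticality; I expect both of these to be routine but worth stating carefully.
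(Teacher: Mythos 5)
Your overall architecture coincides with the paper's: exploit the compactness of $H^1_r(\R^n)\hookrightarrow L^{2q}(\R^n)$, minimize, show the constraint is a natural one, and run the same sandwich argument for (iii) (your derivation of $\kappa_b^*\le\kappa_{-\infty}^*$, of the uniform bounds $\|u_b\|_{2q},\|v_b\|_{2q}\ge c$ from Sobolev applied to the constraint, of $|b|\,\|u_bv_b\|_q^q=O(1)$, and the chain of inequalities forcing equality is essentially the paper's Step 4). However, two steps that carry the real weight of parts (i)--(ii) are asserted rather than proved, and in both cases the justification you give does not suffice.

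First, in part (ii) you claim that the Nehari rescaling of the weak limit ``is the identity or moves it to a point of no larger energy.'' As stated this is false: under condition \eqref{Mbrad Gl Skalierungsbedingung} the rescaled point is the unique global \emph{maximum} of $(s,t)\mapsto I(su,tv)$, so rescaling never decreases the energy of $(u_b,v_b)$. What is actually needed is $I(su_b,tv_b)\le\liminf_k I(u_k,v_k)$, and this requires an argument. The paper's device is the reformulation $\kappa_b^*=\inf\frac{q-1}{2q}\hat J(u,v)^{1/(q-1)}$ of \eqref{Mbrad Gl Charakerisierung kappabrad}, where $\hat J$ is a scaling-invariant maximum of quotients that is weakly lower semicontinuous along the minimizing sequence because the denominators converge strongly. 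Alternatively one can argue directly: the limit inequalities $\|u_b\|^2\le\|u_b\|_{2q}^{2q}+b\|u_bv_b\|_q^q$ and $\|v_b\|_\omega^2\le\|v_b\|_{2q}^{2q}+b\|u_bv_b\|_q^q$ force the projection parameters to satisfy $s,t\le1$ (if, say, $s=\max(s,t)>1$, then $b\le0$ gives $\|u_b\|^2= s^{2q-2}\|u_b\|_{2q}^{2q}+bs^{q-2}t^q\|u_bv_b\|_q^q\ge s^{2q-2}\bigl(\|u_b\|_{2q}^{2q}+b\|u_bv_b\|_q^q\bigr)>\|u_b\|^2$, a contradiction), whence $I(su_b,tv_b)=\frac{q-1}{2q}(s^2\|u_b\|^2+t^2\|v_b\|_\omega^2)\le\kappa_b^*$. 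Either way the step is nontrivial and is missing from your plan. Second, for the solution property you invoke nondegeneracy of the constraint Jacobian ``because $b\le0$,'' but that only yields the \emph{existence} of multipliers $L_1,L_2$ with $I'(u,v)+L_1H_1'(u,v)+L_2H_2'(u,v)=0$; the substantive claim is $L_1=L_2=0$. Testing with $(u,0)$ and $(0,v)$ produces a homogeneous $2\times2$ system for $(L_1,L_2)$ whose determinant must be shown not to vanish, and for $b<0$ this is genuinely unclear a priori (the off-diagonal entries $-qb\|uv\|_q^q$ are positive and the diagonal entries change sign behaviour at $q=2$); the paper's Step 3 settles it by reducing to the impossibility of \eqref{Mbrad Gl AB} with $A,B>1$. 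This is exactly the point that makes the radial problem work where the non-radial one fails (recall that $\inf_{\mathcal{M}_b}I$ is \emph{not} attained for $b<0$ by the Lin--Wei result quoted in the introduction), so it cannot be dismissed as routine.
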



  Since the proof of Theorem \ref{Mbrad Thm Grenzproblem Mbrad} makes extensive use of the fact that
  $H^1_r(\R^n)$ embeds compactly into $L^{2q}(\R^n)$ when $n\geq 2$ one has to resort to different
  methods when the space dimension is one.
  In Theorem \ref{Mbrad Thm Nichtexistenz n=1 q leq 2} we show that there is a threshold value
  $b^*(\omega,q)\in [-\infty,0)$ such that $\kappa_b^*$ is attained whenever  $0\geq
  b>b^*(\omega,q)$ whereas $\kappa_b^*$ is not attained for $b<b^*(\omega,q)$. Moreover we find that
  $b^*(\omega,q)$ has the variational characterization
  \begin{equation} \label{Gl Def b*(omega,q)}
    b^*(\omega,q)
    = \inf  \max_{\alpha>0}  \frac{(2+\omega^{\frac{q+1}{q-1}})^{1-q} \|u_0\|^{-2q}\|u_0\|_{2q}^{2q}
         (\|u\|^2+\alpha^2\|v\|_\omega^2)^q- \|u\|_{2q}^{2q}-\alpha^{2q}\|v\|_{2q}^{2q}}{2\alpha^q
        \|uv\|_q^q}
  \end{equation}
  where the infimum is taken over all $u,v\in H^1_r(\R)$ with $uv\neq 0$. As above the
  function $u_0$ appearing in \eqref{Gl Def b*(omega,q)} denotes the positive solution of $-\Delta
  u+u=u^{2q-1}$ in $\R^n$. Our first result dealing with the case $n=1$ reads as follows.

  \begin{thm} \label{Mbrad Thm Kompaktheit n=1}
     Let $n=1,\omega\geq 1$. Then the following holds:
     \begin{itemize} 
       \item[(i)] We have $\kappa_{-\infty}^* = (2+\omega^{\frac{q+1}{q-1}})I(u_0,0)$ and
       $\kappa_{-\infty}^*$ is not attained at any element of $\mathcal{M}_{-\infty}^*$.
       \item[(ii)] If $b<b^*(\omega,q)$ then $\kappa_b^*=\kappa_{-\infty}^*$ and $\kappa_b^*$ is not attained
       at any element of $\mathcal{M}_b^*$.
       \item[(iii)] If $0\geq b>b^*(\omega,q)$ then $\kappa_b^*<\kappa_{-\infty}^*$ and $\kappa_b^*$ is
       attained at a nonnegative fully nontrivial solution of \eqref{Mbrad Gl DGL}.
     \end{itemize}
   \end{thm}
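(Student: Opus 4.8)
I would prove the three assertions in turn, using throughout that the defining relations reduce the energy on $\mathcal M_b^*$ and on $\mathcal M_{-\infty}^*$ to $I(u,v)=\tfrac{q-1}{2q}(\|u\|^2+\|v\|_\omega^2)$; I abbreviate $c_\infty:=I(u_0,0)=\tfrac{q-1}{2q}\|u_0\|^2$, note that $v_{0,\omega}(x):=\omega^{1/(q-1)}u_0(\omega x)$ is the ground state of $-\Delta v+\omega^2v=v^{2q-1}$ with $I(0,v_{0,\omega})=\omega^{\frac{q+1}{q-1}}c_\infty$, and set $M:=(2+\omega^{\frac{q+1}{q-1}})\|u_0\|^2$, so the value claimed in (i) is $\tfrac{q-1}{2q}M$. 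For \emph{part (i)}: given $(u,v)\in\mathcal M_{-\infty}^*$, the functions $u,v$ are continuous with $uv\equiv0$, hence $u(0)=0$ or $v(0)=0$. If $u(0)=0$, then $u\mathbf{1}_{(0,\infty)}$ extended by $0$ is a nonzero function on the scalar Nehari manifold of $\R$, so $\tfrac{q-1}{2q}\|u\mathbf{1}_{(0,\infty)}\|^2\ge c_\infty$, and since $u$ is even this gives $I(u,0)=2\cdot\tfrac{q-1}{2q}\|u\mathbf{1}_{(0,\infty)}\|^2\ge2c_\infty$, while $I(0,v)\ge\omega^{\frac{q+1}{q-1}}c_\infty$ because $v$ sits on the $\omega$-Nehari manifold; thus $I(u,v)\ge\tfrac{q-1}{2q}M$. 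The case $v(0)=0$ is symmetric and, by $\omega\ge1$, gives an even larger bound. The matching upper bound I obtain from a test pair whose second component is a far spread-out (Nehari-rescaled) truncation of $v_{0,\omega}$ and whose first component is two copies of $u_0$ escaping to $\pm\infty$, truncated away from the support of the first; its energy tends to $2c_\infty+\omega^{\frac{q+1}{q-1}}c_\infty$. Non-attainment follows since equality in the lower bound would force a half-line-supported function to realise a scalar ground-state level, impossible as the scalar minimisers are the full-support translates of $u_0$ (resp. $v_{0,\omega}$).

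\emph{Part (ii).} Since $uv\equiv0$ kills the coupling term, $\mathcal M_{-\infty}^*\subseteq\mathcal M_b^*$ for every $b$, whence $\kappa_b^*\le\kappa_{-\infty}^*$. For the reverse, fix $b<b^*(\omega,q)$ and $(u,v)\in\mathcal M_b^*$. If $uv\equiv0$ then $(u,v)\in\mathcal M_{-\infty}^*$ and $I(u,v)>\kappa_{-\infty}^*$ by (i); so assume $uv\not\equiv0$ and, for contradiction, $I(u,v)\le\kappa_{-\infty}^*$, i.e.\ $\|u\|^2+\|v\|_\omega^2\le M$. Writing $\Phi(u,v,\alpha)$ for the quotient in \eqref{Gl Def b*(omega,q)} (its coefficient equals $M^{1-q}$ because $\|u_0\|_{2q}^{2q}=\|u_0\|^2$) and inserting the $\mathcal M_b$-relations $\|u\|_{2q}^{2q}=\|u\|^2-b\|uv\|_q^q$, $\|v\|_{2q}^{2q}=\|v\|_\omega^2-b\|uv\|_q^q$, the inequality $\Phi(u,v,\alpha)\le b$ becomes
\[
  M^{1-q}\big(\|u\|^2+\alpha^2\|v\|_\omega^2\big)^q-\|u\|^2-\alpha^{2q}\|v\|_\omega^2+b\|uv\|_q^q(1-\alpha^q)^2\le0 .
\]
Here the last term is $\le0$ because $b<0$, and the first three are $\le0$ for every $\alpha>0$ by the elementary inequality $(\lambda x+\mu y)^q\le\lambda x^q+\mu y^q$ (valid for $q\ge1$, $x,y\ge0$, $\lambda+\mu\le1$) applied with $\lambda=\|u\|^2/M$, $\mu=\|v\|_\omega^2/M$, $x=1$, $y=\alpha^2$. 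Hence $\sup_{\alpha>0}\Phi(u,v,\alpha)\le b<b^*(\omega,q)$, contradicting that $b^*(\omega,q)$ is the infimum over all admissible shapes. Therefore $I>\kappa_{-\infty}^*$ on all of $\mathcal M_b^*$, so $\kappa_b^*=\kappa_{-\infty}^*$ and $\kappa_b^*$ is not attained.

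\emph{Part (iii).} From $b>b^*(\omega,q)$ I pick a shape $(\hat u,\hat v)$, $\hat u\hat v\ne0$, with $\sup_{\alpha>0}\Phi(\hat u,\hat v,\alpha)<b$; testing the $\alpha$ that minimises the quantity $\|\hat u\|_{2q}^{2q}\alpha^{-q}+\|\hat v\|_{2q}^{2q}\alpha^{q}$ shows this forces $b^2\|\hat u\hat v\|_q^{2q}<\|\hat u\|_{2q}^{2q}\|\hat v\|_{2q}^{2q}$, and an intermediate-value argument then supplies $\alpha_*,t_*>0$ with $(t_*\hat u,t_*\alpha_*\hat v)\in\mathcal M_b^*$; computing, $I(t_*\hat u,t_*\alpha_*\hat v)=\tfrac{q-1}{2q}(\hat A+\alpha_*^2\hat C)^{q/(q-1)}(\hat B+\alpha_*^{2q}\hat D+2b\alpha_*^q\hat E)^{-1/(q-1)}$ (with $\hat A=\|\hat u\|^2$, $\hat B=\|\hat u\|_{2q}^{2q}$, $\hat C=\|\hat v\|_\omega^2$, $\hat D=\|\hat v\|_{2q}^{2q}$, $\hat E=\|\hat u\hat v\|_q^q$), and this is $<\tfrac{q-1}{2q}M=\kappa_{-\infty}^*$ precisely because $\Phi(\hat u,\hat v,\alpha_*)<b$; hence $\kappa_b^*<\kappa_{-\infty}^*$. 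For attainment I take a minimising sequence for $\kappa_b^*$, upgrade it by Ekeland's principle to a Palais--Smale sequence for $I$ on $H^1_r(\R)^2$ (permissible since $\mathcal M_b$ is a natural constraint, together with the principle of symmetric criticality; equivalently, as in the paper, one replaces it by the minimisers of the problems on intervals $B_R$, which exist by compact embedding, and lets $R\to\infty$), and apply a profile decomposition: the weak limit $(u_\infty,v_\infty)$ solves \eqref{Mbrad Gl DGL}, and the remainder is a finite sum of nontrivial solutions of the coupled system or the decoupled scalar equations on $\R$ escaping to $\pm\infty$ and, by evenness, occurring in mirror pairs, with energies and the $\mathcal M_b$-relations asymptotically additive. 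Using the gap $\kappa_b^*<\kappa_{-\infty}^*$, the bounds $\|u\|^2\ge\|u_0\|^2$, $\|v\|_\omega^2\ge\|v_{0,\omega}\|^2$ on $\mathcal M_b$ (which give $\inf_{\mathcal M_b}I=(1+\omega^{\frac{q+1}{q-1}})c_\infty$ and hence $\kappa_b^*\ge(1+\omega^{\frac{q+1}{q-1}})c_\infty$), the scalar levels $c_\infty$ and $\omega^{\frac{q+1}{q-1}}c_\infty$, and $\omega\ge1$, I rule out all loss of compactness: a mirror pair of bubbles costs at least $2c_\infty$; if $(u_\infty,v_\infty)$ is not fully nontrivial the surviving component lies on a scalar Nehari manifold while the escaped mass forms such pairs, forcing $\kappa_b^*\ge\kappa_{-\infty}^*$; and if $(u_\infty,v_\infty)$ is fully nontrivial but some mass escapes, then $\kappa_b^*\ge(1+\omega^{\frac{q+1}{q-1}})c_\infty+2c_\infty>\kappa_{-\infty}^*$ — both contradicting the gap. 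Hence the sequence converges strongly, $(u_\infty,v_\infty)\in\mathcal M_b^*$ realises $\kappa_b^*$, is a critical point of $I|_{\mathcal M_b^*}$ and hence of $I$, hence a classical solution of \eqref{Mbrad Gl DGL}; replacing it by $(|u_\infty|,|v_\infty|)$, which changes none of the norms entering $\mathcal M_b^*$ or $I$, yields a nonnegative minimiser and solution, positive in both components by the strong maximum principle.

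The step I expect to be the main obstacle is the compactness analysis in (iii): running the profile decomposition inside the radial class and bookkeeping the bubble energies exactly, the delicate point being that $\kappa_{-\infty}^*-\kappa_b^*<c_\infty$, so the energy gap is only just wide enough to exclude the splitting into separated ground states. The fibering computation in the first half of (iii) and the verification of the test-function limits in (i) are the remaining, more routine, technical points.
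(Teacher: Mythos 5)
Your proposal is sound in outline and, for parts (i) and the first half of (iii), follows the paper's route: the same splitting of a component vanishing at the origin into two half-line pieces on the scalar Nehari manifold, the same escaping-translates test pair for the upper bound, and the same fibering computation showing that $b>b^*(\omega,q)$ produces an admissible pair with $\hat J$-value below $M^{q-1}$. Part (ii) is where you genuinely diverge, and to your advantage: the paper proves non-attainment by monotonicity of $b\mapsto\kappa_b^*$ together with an $\eps$-perturbation to $b+\eps<b^*(\omega,q)$, whereas you substitute the $\mathcal M_b$-identities into the quotient of \eqref{Gl Def b*(omega,q)} and reduce everything to $b\|uv\|_q^q(1-\alpha^q)^2\le 0$ plus the Jensen-type inequality $(\lambda+\mu\alpha^2)^q\le\lambda+\mu\alpha^{2q}$ for $\lambda+\mu\le1$; this shows directly that $I>\kappa_{-\infty}^*$ pointwise on $\mathcal M_b^*$, which gives both $\kappa_b^*=\kappa_{-\infty}^*$ and non-attainment in one stroke. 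For the attainment in (iii) the paper deliberately avoids an abstract profile decomposition: it minimises on intervals $B_R$ (where the embedding is compact), uses the first integral of Proposition~\ref{Mbrad Prop Identit�t n=1} to show that if the weak limit loses a component then the other limit component is exactly $v_0$ (via Kwong's uniqueness) and that the lost component has interior maxima of height $\ge 1$ escaping to $\pm\infty$, and then does the energy count by hand with cut-offs. Your Ekeland/profile-decomposition scheme is a viable substitute, but it leaves unaddressed the verification that a minimising sequence on the two-constraint manifold can be upgraded to a free Palais--Smale sequence, which the paper's interval construction sidesteps by producing genuine solutions from the start.

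The one place where your bookkeeping, as stated, is too coarse is the semitrivial-limit case $v_\infty=0$ with $\omega>1$. There the surviving component contributes only $c_\infty$ and a mirror pair of bubbles priced at ``$2c_\infty$'' gives a total of $3c_\infty$, which does \emph{not} exceed $\kappa_{-\infty}^*=(2+\omega^{\frac{q+1}{q-1}})c_\infty$ when $\omega^{\frac{q+1}{q-1}}>1$. What saves the argument (and what the paper uses implicitly in its ``analogously'' for this case) is that each escaping bubble must carry nontrivial $v$-mass, and any finite-energy solution with nontrivial second component satisfies $\|\tilde v\|_\omega^2=\|\tilde v\|_{2q}^{2q}+b\|\tilde u\tilde v\|_q^q\le\|\tilde v\|_{2q}^{2q}$, hence $\|\tilde v\|_\omega^2\ge\|v_0\|_\omega^2$ and the bubble costs at least $\omega^{\frac{q+1}{q-1}}c_\infty$, yielding $(1+2\omega^{\frac{q+1}{q-1}})c_\infty\ge(2+\omega^{\frac{q+1}{q-1}})c_\infty$. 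You list exactly this ingredient ($\|v\|_\omega^2\ge\|v_{0,\omega}\|^2$ on $\mathcal M_b$) but do not deploy it in the stated case analysis; with that correction the compactness argument closes.
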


   In view of part (iii) we may prove an explicit sufficient condition for the existence of a fully nontrivial
   solution of \eqref{Mbrad Gl DGL} by estimating the value $b^*(\omega,q)$ from above. To this end we use
   $(u,v)=(u_0,u_0(\omega\cdot))$ as a test function in \eqref{Gl Def b*(omega,q)} which leads to the
   following result.

   \begin{cor} \label{Mbrad Kor hinreichendes Krit Mbrad n=1}
     Let $n=1,\omega\geq 1$. Then for all $b$ satisfying
     \begin{align} \label{Mbrad Gl hinreichendes Krit Mbrad n=1}
 		0\geq b > \max_{\alpha>0} \frac{(2+\omega^{\frac{q+1}{q-1}})^{1-q} (1+\alpha^2\omega)^q-
           1-\alpha^{2q}\omega^{-1}}{2\alpha^q \omega^{-1/2}}
	\end{align}
	the value $\kappa_b^*$ is attained at a nonnegative fully nontrivial solution of \eqref{Mbrad Gl DGL}.
	In particular this is true in case
	 $$
	   \text{(i)}\quad q=2,\; b> -\frac{1}{\omega^{3/2}+\sqrt{2(1+\omega^3)}}
 		\qquad\text{or}\qquad
 		\text{(ii)}\quad \omega=1,\; b> (\frac{2}{3})^{q-1}-1.
	 $$
   \end{cor}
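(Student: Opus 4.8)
The plan is to invoke Theorem~\ref{Mbrad Thm Kompaktheit n=1}(iii), so the whole task reduces to producing an explicit upper bound for $b^*(\omega,q)$. By the variational characterization \eqref{Gl Def b*(omega,q)}, any fixed admissible pair $(u,v)\in H^1_r(\R)$ with $uv\not\equiv 0$ yields
$$
  b^*(\omega,q) \;\leq\; \max_{\alpha>0}  \frac{(2+\omega^{\frac{q+1}{q-1}})^{1-q} \|u_0\|^{-2q}\|u_0\|_{2q}^{2q}
         (\|u\|^2+\alpha^2\|v\|_\omega^2)^q- \|u\|_{2q}^{2q}-\alpha^{2q}\|v\|_{2q}^{2q}}{2\alpha^q
        \|uv\|_q^q}.
$$
First I would substitute the test function $(u,v)=(u_0,u_0(\omega\,\cdot))$. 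The point of this choice is that $u_0$ solves $-u_0''+u_0=u_0^{2q-1}$ on $\R$, hence the Nehari-type identity $\|u_0\|^2=\|u_0\|_{2q}^{2q}$ holds, which cancels the awkward factor $\|u_0\|^{-2q}\|u_0\|_{2q}^{2q}=1$. Then I would compute, by the scaling $v(x)=u_0(\omega x)$, the elementary relations $\|v\|_\omega^2=\omega\|u_0\|^2$, $\|v\|_{2q}^{2q}=\omega^{-1}\|u_0\|_{2q}^{2q}=\omega^{-1}\|u_0\|^2$, and $\|u_0 v\|_q^q=\int_\R u_0(x)^q u_0(\omega x)^q\,dx$. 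Dividing numerator and denominator by $\|u_0\|^2$ and abbreviating $c_q:=\|u_0\|^{-2}\int_\R u_0(x)^q u_0(\omega x)^q\,dx$, this produces
$$
  b^*(\omega,q) \;\leq\; \max_{\alpha>0}  \frac{(2+\omega^{\frac{q+1}{q-1}})^{1-q} (1+\alpha^2\omega)^q- 1-\alpha^{2q}\omega^{-1}}{2\alpha^q c_q}.
$$
Since $u_0>0$, the constant $c_q$ is strictly positive, so the displayed maximum is a finite negative number precisely when the numerator is negative for all $\alpha>0$; and in that regime $c_q\le$ (something) only makes the bound worse, so to get the clean statement \eqref{Mbrad Gl hinreichendes Krit Mbrad n=1} I would argue that $c_q\ge \omega^{-1/2}$ is \emph{not} needed — rather, I replace $c_q$ by the explicit lower bound coming from Cauchy--Schwarz or a direct estimate. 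Concretely, by Cauchy--Schwarz $\int u_0(x)^q u_0(\omega x)^q\,dx \ge$ is the wrong direction; instead one uses that the map $\alpha\mapsto$ numerator is negative, hence dividing by a \emph{larger} denominator $2\alpha^q c_q$ with $c_q \ge \omega^{-1/2}\|u_0\|^{-2}\|u_0\|_{2q}^{2q}=\omega^{-1/2}$ (using $\|u_0\|^2=\|u_0\|_{2q}^{2q}$ and the sharp rearrangement/convexity inequality $\int u_0^q u_0(\omega\cdot)^q \ge \omega^{-1/2}\int u_0^{2q}$, which holds since $u_0$ is a symmetric decreasing bell) increases the ratio, giving exactly \eqref{Mbrad Gl hinreichendes Krit Mbrad n=1}. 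Once \eqref{Mbrad Gl hinreichendes Krit Mbrad n=1} holds, Theorem~\ref{Mbrad Thm Kompaktheit n=1}(iii) applies and delivers the nonnegative fully nontrivial solution.

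For the two special cases I would specialize the right-hand side of \eqref{Mbrad Gl hinreichendes Krit Mbrad n=1}. For (i), setting $q=2$ the numerator of the quotient becomes the quadratic-in-$\alpha^2$ expression $(2+\omega^3)^{-1}(1+\alpha^2\omega)^2-1-\alpha^4\omega^{-1}$; I would carry out the one-variable maximization over $t=\alpha^2>0$ by differentiation, find the maximizer explicitly, and simplify — the resulting value is $-1/(\omega^{3/2}+\sqrt{2(1+\omega^3)})$, so any $b$ above it works. For (ii), setting $\omega=1$ collapses $2+\omega^{(q+1)/(q-1)}=3$ and the quotient becomes $\max_{\alpha>0} \big(3^{1-q}(1+\alpha^2)^q-1-\alpha^{2q}\big)/(2\alpha^q)$; the natural guess $\alpha=1$ gives $(3^{1-q}2^q-2)/2 = (2/3)^{q-1}-1$, and I would check (again by a short calculus argument, using that the function is maximized at $\alpha=1$ by symmetry/convexity) that $\alpha=1$ is indeed the maximizer, so the bound $b>(2/3)^{q-1}-1$ follows.

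The main obstacle is the estimate for the cross term $c_q=\|u_0\|^{-2}\int_\R u_0(x)^q u_0(\omega x)^q\,dx$: this is where one has to be careful, because the coefficient $\omega^{-1/2}$ appearing in \eqref{Mbrad Gl hinreichendes Krit Mbrad n=1} must be justified. I expect this to follow from the substitution $y=\omega x$ together with the monotonicity of $u_0$ and a Chebyshev/rearrangement-type inequality comparing $\int u_0(y/\omega)^q u_0(y)^q\,dy$ with $\int u_0(y)^{2q}\,dy$ up to the Jacobian factor $\omega^{-1}$ and a further $\omega^{1/2}$ gain from the spreading — but getting the constant exactly right (rather than merely asymptotically as $\omega\to1$) is the delicate point, and if a clean inequality is unavailable one instead keeps $c_q$ implicit and only claims the weaker statement that \emph{there exists} a threshold, which is already enough for existence; the explicit form \eqref{Mbrad Gl hinreichendes Krit Mbrad n=1} then requires the monotone rearrangement estimate to be spelled out. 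The remaining maximizations in (i) and (ii) are elementary one-variable calculus and should present no difficulty beyond bookkeeping.
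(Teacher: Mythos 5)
Your overall strategy is exactly the paper's: plug the test function $(u,v)=(u_0,u_0(\omega\cdot))$ into \eqref{Gl Def b*(omega,q)}, reduce to an explicit one-parameter maximum, and invoke Theorem~\ref{Mbrad Thm Kompaktheit n=1}~(iii); the scaling identities and the treatment of the special cases (i), (ii) are also fine in outline. However, the step you yourself flag as ``the main obstacle'' is handled incorrectly, in two ways. First, the sign logic is inverted. Writing $N(\alpha)$ for the numerator and $c_q=\|u_0\|_{2q}^{-2q}\|u_0u_0(\omega\cdot)\|_q^q$, you have $b^*(\omega,q)\leq \max_{\alpha>0} N(\alpha)/(2\alpha^q c_q)$ and want to conclude $b^*(\omega,q)\leq \max_{\alpha>0} N(\alpha)/(2\alpha^q\omega^{-1/2})$. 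Since $N(\alpha)<0$, the pointwise inequality $N(\alpha)/(2\alpha^q c_q)\leq N(\alpha)/(2\alpha^q\omega^{-1/2})$ is equivalent to $c_q\leq\omega^{-1/2}$, not $c_q\geq\omega^{-1/2}$: a \emph{smaller} denominator makes a negative quotient more negative, which is the direction you need. Second, the inequality you invoke, $\int u_0^q\,u_0(\omega\cdot)^q\geq\omega^{-1/2}\int u_0^{2q}$, is false for $\omega>1$: Cauchy--Schwarz gives $\int u_0^q\,u_0(\omega\cdot)^q\leq\|u_0\|_{2q}^q\|u_0(\omega\cdot)\|_{2q}^q=\omega^{-1/2}\|u_0\|_{2q}^{2q}$ with equality only when $u_0^q$ and $u_0(\omega\cdot)^q$ are proportional, i.e.\ only for $\omega=1$. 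So the ``rearrangement'' estimate you postpone does not exist and is not needed; the plain Cauchy--Schwarz bound you dismissed as ``the wrong direction'' is precisely the right one.

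There is a second, smaller gap: the whole replacement of $c_q$ by $\omega^{-1/2}$ hinges on $N(\alpha)<0$ for \emph{all} $\alpha>0$, and you assume this rather than prove it. The paper verifies it by computing that $N$ attains its maximum $2^{1-q}-1<0$ at $\alpha=2^{-1/2}\omega^{1/(q-1)}$; some such verification must be included, since otherwise the comparison of denominators could reverse on the set where $N>0$. With these two corrections --- check $\max_\alpha N(\alpha)=2^{1-q}-1<0$, then use $\|u_0u_0(\omega\cdot)\|_q^q\leq\omega^{-1/2}\|u_0\|_{2q}^{2q}$ to pass from $c_q$ to $\omega^{-1/2}$ --- your argument becomes the paper's proof.
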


   In order to find necessary conditions for the existence of a minimizer one has to estimate the value
   $b^*(\omega,q)$ from below. For exponents $1<q\leq 2$ we may combine Theorem \ref{Mbrad Thm
   Kompaktheit n=1} (iii) with the following nonexistence result to see that $b^*(\omega,q)$ must be larger
   than or equal to the right hand side in \eqref{Gl Nichtexistenzkriterium n=1}.

  \begin{thm}\label{Mbrad Thm Nichtexistenz n=1 q leq 2}
    Let $n=1, 1<q\leq 2$ and assume
    \begin{align} \label{Gl Nichtexistenzkriterium n=1}
      b< \min_{z>0} \frac{(\omega^2-(q-1)\omega) z^{2q}  - q z^2 - q\omega^3 z^{2q-2}
      - (\omega^2(q-1)-\omega)}{qz^{q+2}+(q-2)(\omega^2+\omega)z^q+q\omega^3 z^{q-2}}.
    \end{align}
    Then the equation \eqref{Mbrad Gl DGL} does not have any fully nontrivial solution. In particular this
    holds in case $q=2,\, b< -\frac{\omega^2+1}{2\omega}$ or $1<q\leq 2,\omega=1,\, b<-1$.
  \end{thm}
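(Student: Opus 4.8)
The strategy is a proof by contradiction. Suppose $(u,v)$ is a fully nontrivial solution of \eqref{Mbrad Gl DGL}; I will show that then $b$ must be at least the right-hand side of \eqref{Gl Nichtexistenzkriterium n=1}, which is absurd. By elliptic regularity $u,v\in C^2(\R)$, and standard ODE/bootstrap arguments show $u,v,u',v'\to 0$ at $\pm\infty$; hence all integrations by parts below are legitimate and all boundary terms at $\pm\infty$ vanish. Furthermore $\|uv\|_q^q>0$: a nontrivial solution of a second-order ODE cannot vanish together with its derivative at any point, so neither component can vanish on an interval.

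First I collect the standard integral identities. Testing the two equations with $u$ and with $v$ shows that $(u,v)\in\mathcal{M}_b$, i.e.\ $\|u\|^2=\|u\|_{2q}^{2q}+b\|uv\|_q^q$ and $\|v\|_\omega^2=\|v\|_{2q}^{2q}+b\|uv\|_q^q$. Since for $n=1$ the system is autonomous, the Hamiltonian
\[
  H=\tfrac12(u')^2+\tfrac12(v')^2-\tfrac12u^2-\tfrac{\omega^2}{2}v^2+\tfrac1{2q}\big(|u|^{2q}+|v|^{2q}\big)+\tfrac bq|u|^q|v|^q
\]
is constant along the solution, and by the decay $H\equiv0$; integrating this identity (equivalently, testing with $xu'$, $xv'$) yields the Pohozaev identity $\tfrac12\|u'\|_2^2+\tfrac12\|v'\|_2^2-\tfrac12\|u\|_2^2-\tfrac{\omega^2}2\|v\|_2^2+\tfrac1{2q}(\|u\|_{2q}^{2q}+\|v\|_{2q}^{2q})+\tfrac bq\|uv\|_q^q=0$. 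Combining Nehari and Pohozaev gives, on the one hand, the $b$-free relation $(q+1)(\|u'\|_2^2+\|v'\|_2^2)=(q-1)(\|u\|_2^2+\omega^2\|v\|_2^2)$ and, on the other hand, $b\|uv\|_q^q=\|u\|^2-\|u\|_{2q}^{2q}=\|v\|_\omega^2-\|v\|_{2q}^{2q}<0$.

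These identities alone are not enough — formally there are more unknown integral quantities than relations, and $\|uv\|_q^q$ has no a priori positive lower bound — so one more, genuinely one-dimensional, ingredient is needed. The continuous functions $u,v$ decay to $0$, hence each attains a local maximum, where the second derivative is $\le0$; feeding this into \eqref{Mbrad Gl DGL} gives inequalities between the amplitudes $|u|,|v|$ of the two components at that point, and evaluating $H\equiv0$ there gives one more relation between them. Coupling this with the sharp one-dimensional Gagliardo--Nirenberg inequality $\|w\|_{2q}^{2q}\le S_q\|w'\|_2^{q-1}\|w\|_2^{q+1}$ — whose extremal is $u_0$, so that $S_q$ is fixed by the scalar identities $\|u_0'\|_2^2=\tfrac{q-1}{q+1}\|u_0\|_2^2$, $\|u_0\|_{2q}^{2q}=\tfrac{2q}{q+1}\|u_0\|_2^2$ — one eliminates the remaining free amplitude and, writing $z>0$ for the suitably $\omega$-rescaled ratio of the two amplitudes, arrives at
\[
  b\ \ge\ \frac{(\omega^2-(q-1)\omega)z^{2q}-qz^2-q\omega^3z^{2q-2}-(\omega^2(q-1)-\omega)}{qz^{q+2}+(q-2)(\omega^2+\omega)z^q+q\omega^3z^{q-2}}.
\]
Since the right-hand side is bounded below by its infimum over all $z>0$, this contradicts \eqref{Gl Nichtexistenzkriterium n=1}. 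The hypothesis $1<q\le2$ is used precisely at the elimination step: it makes $t\mapsto t^{q-1}$ concave, so the relevant Young/Jensen inequality points in the direction that produces the coefficient $(q-2)\le0$ in the denominator (for $q>2$ that inequality would reverse). The two explicit special cases follow by computing the infimum over $z>0$: at $q=2$ the rational function collapses and its minimum equals $-\tfrac{\omega^2+1}{2\omega}$, while at $\omega=1$ it equals $-1$.

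I expect the genuine difficulty to lie in the third paragraph: after the bare conservation laws the problem is underdetermined, so the pointwise one-dimensional information at the maxima must be exploited, then coupled to the sharp Gagliardo--Nirenberg inequality and optimized so as to reproduce exactly the rational function in \eqref{Gl Nichtexistenzkriterium n=1}; making the algebra come out precisely — and verifying that the argument nowhere tacitly uses positivity or symmetry of $(u,v)$, which a general fully nontrivial solution need not possess — is where the real work is.
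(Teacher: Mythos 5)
There is a genuine gap: the entire argument is carried by your third paragraph, and that paragraph is an announcement, not a proof. You assert that combining (a) second-derivative information at the local maxima of $u$ and of $v$ (which in general occur at \emph{different} points), (b) the vanishing of the Hamiltonian there, and (c) the sharp one-dimensional Gagliardo--Nirenberg inequality will, after ``elimination of the remaining free amplitude,'' produce exactly the rational function of \eqref{Gl Nichtexistenzkriterium n=1} with $z$ a ``rescaled ratio of the two amplitudes.'' No such derivation is given, and there is no reason to expect this route to yield that specific expression: in the actual proof $z=u(x)/v(x)$ is a \emph{pointwise} ratio at a critical point $x$ of the auxiliary function $u^\omega v$, and the rational function arises as an explicit multiple of $(u^\omega v)''(x)$, computed by substituting the ODE and the pointwise identity of Proposition~\ref{Mbrad Prop Identit�t n=1} into the second derivative. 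The integral identities you do establish (Nehari, Pohozaev) play no role whatsoever in the paper's argument, and you yourself concede they are insufficient; what replaces them is missing.

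The mechanism of the paper is also logically different from what you sketch. One first restricts to a maximal open interval $A\subset\{u>0,v>0\}$ (after sign flips), and the hypothesis on $b$ is used to show that \emph{every} critical point of $u^\omega v$ in $A$ is a strict local minimum; the contradiction is then soft: a positive function on an interval that must decay at the ends of that interval cannot have only strict local minima as critical points (either it is monotone and fails to decay, or its unique critical point is a minimum and it is bounded below by a positive constant, forcing $A=\R$ and again contradicting $u,v\in H^1(\R)$). Your proposal instead aims to derive the inequality $b\geq\min_z(\cdots)$ directly and globally, which is a different logical structure, and the step where ``the algebra comes out precisely'' --- which you correctly identify as the real work --- is exactly the part left undone. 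The role of $1<q\leq 2$ is also not what you describe: it is not a concavity/Jensen step but the condition ensuring that the minimum over $z>0$ in \eqref{Gl Nichtexistenzkriterium n=1} is finite (for $q>2$ the infimum is $-\infty$ as $z\to 0$), the sign of $(q-2)$ entering only through the explicit coefficient $b(2-q)(\omega^2+\omega)u^qv^q$ in the expansion of $(u^\omega v)''$.
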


  \begin{bem} 
    \begin{itemize}
      \item[(i)] It is worth noticing that Theorem \ref{Mbrad Thm Nichtexistenz n=1 q leq 2} not only
      applies to solutions of minimal energy but to all finite energy solutions.
      \item[(ii)] It would be desirable to know whether a similar nonexistence result is true for exponents
      larger than 2.
      \item[(iii)] From the strong minimum principle for nonnegative supersolutions of elliptic PDEs we know
      that the solutions $(u,v)$ of \eqref{Mbrad Gl DGL} found in Theorem \ref{Mbrad Thm Grenzproblem Mbrad}
      and Theorem \ref{Mbrad Thm Kompaktheit n=1} satisfy $u>0$ and $v>0$ when $q\geq 2$. For $1<q<2$
      we may apply the minimum principle to the function $u+v$ to conclude that $u+v$ is positive. It seems to be
      unclear, however, if both $u$ and $v$ are positive functions in that case.
    \end{itemize}
  \end{bem}

  Finally let us illustrate our main results with two qualitative graphs of the map $b\mapsto \kappa_b^*$ in
  the cases $n\geq 2$ and $n=1,1<q\leq 2$. The monotonicity of this function is referred to at the end of the
  first step in the proof of Theorem \ref{Mbrad Thm Grenzproblem Mbrad}.
  
  ~

   \begin{figure}[h!]
    \centering
	\subfigure[energy levels for $n\geq 2$]
	{
      \begin{tikzpicture}[domain=-30:0, xscale=0.18, yscale=0.5]
    \draw[<-] (-30,0) node[left] {$b$} -- (1,0) node[right,below] {$0$};
    \draw[->] (0,-1) -- (0,9);
    \draw plot  (\x,{ (7*(\x*\x+10)  / (\x*\x+100)) + (13/10)});
    \draw[thin,dashed] (-30,8) -- (0,8) node[right]{$\kappa_{-\infty}^*$};
    \draw[thin,dashed] (-30,2) -- (0,2) node[right]{$\kappa_0^*$};
    \node at (-15,5){$\kappa_b^*$};
     \label{Mbrad fig kappabstar}
    \end{tikzpicture}
  }
  \hfill
 	\subfigure[energy levels for $n=1,1<q\leq 2$]
	{
       \begin{tikzpicture}[domain=-30:0, xscale=0.18, yscale=0.5]
    \draw[<-] (-30,0) node[left] {$b$} -- (1,0) node[right,below] {$0$};
    \draw[->] (0,-1) -- (0,9);
    \draw plot[domain=-15:0]  (\x,{ (7*(\x*\x+10)  / (\x*\x+100)) + (13/10)});
    \draw plot[domain=-30:-15]  (\x,{(7*(225+10)  / (225+100)) + (13/10)});
    \draw[thin,dashed] (-30,{(7*(225+10)  / (225+100)) + (13/10)}) -- (0,{(7*(225+10)  / (225+100)) +
    (13/10)}) node[right]{$\kappa_{-\infty}^*$};
    \draw[thin,dashed] (-30,2) -- (0,2) node[right]{$\kappa_0^*$};
    \node at (-10,3.5){$\kappa_b^*$};
     \label{Mbrad fig kappabstar}
    \end{tikzpicture}
  }
  \end{figure}
 
 ~
 
 \section{Notations and conventions}

  In the following we always assume $n\in\N$ and $1<q<\frac{n}{n-2}$ whenever $n\geq 3$ and $1<q<\infty$
  whenever $n=1$ or $n=2$ so that the Sobolev embedding $H^1_r(\R^n)\to L^{2q}(\R^n)$ exists and
  is compact in case $n\geq 2$. A function $(u,v)$ is called nontrivial if $u\neq 0$ or $v\neq 0$ and it is
  called fully nontrivial in case $u\neq 0$ and $v\neq 0$. The same way $(u,v)$ is nonnegative whenever
  $u\geq 0,v\geq 0$ and it is positive in case $u>0,v>0$. We always consider weak radially
  symmetric solutions of \eqref{Mbrad Gl DGL} and \eqref{Gl DGL bei -unendlich} where it is clear that all solutions of
  \eqref{Mbrad Gl DGL} are twice continuously differentiable on $\R^n$ and smooth in the interior of each
  nodal domain. We use the symbols $\|\cdot\|_r = \|\cdot\|_{L^r(\R^n)}$ to denote the standard Lebesgue norms
  for $1\leq r\leq \infty$ and we set $\|(u,v)\| := \sqrt{\|u\|^2+\|v\|_\omega^2}$ for $u,v\in H^1_r(\R^n)$
  where
  \begin{gather}  \label{Gl Defn norms}
    \|u\|
    := \Big( \int_{\R^n} |\nabla u|^2+u^2\dx \Big)^{1/2},\quad
    \|v\|_\omega
    := \Big( \int_{\R^n} |\nabla v|^2+\omega^2 v^2\dx \Big)^{1/2}.
  \end{gather}
  From the definition of $I$ we get
  \begin{equation} \label{Gl für (u,v) in Nehari}
    I(u,v) = \frac{q-1}{2q} (\|u\|^2+\|v\|_\omega^2) \qquad\text{for all }(u,v)\in\mathcal{N}_b
  \end{equation}
  and in particular for all elements of $\mathcal{M}_b$ or $\mathcal{M}_b^*$. The function $u_0\in
  H^1_r(\R^n)$ is defined as above and for notational convenience we put $c_0:=I(u_0,0)$.
  We set $v_0:= \omega^{1/(q-1)}u_0(\omega\cdot)$ so that $v_0$ is the unique positive
  solution of $-\Delta v + \omega^2 v = v^{2q-1}$ in $\R^n$. A short calculation shows
  $$
    I(0,v_0)=\omega^{\frac{2q-n(q-1)}{q-1}} c_0.
  $$
  We will use the facts that the functions $u_0,v_0$ are minimizers of the functionals
  $\frac{\|u\|_{~~}}{\|u\|_{2q}}, \frac{\|v\|_{\omega~}}{\|v\|_{2q}}$, respectively and that all
  minimizers of these functionals are translates of $u_0,v_0$. Moreover, we use that $(u_0,0)$ is a minimizer
  of the functional $I|_{\mathcal{N}_b}$ when $b<0$.



  \section{Proof of Theorem \ref{Mbrad Thm Grenzproblem Mbrad}}

  Throughout this section except for the first step we assume $n\geq 2$ according to the assumptions of
  Theorem~\ref{Mbrad Thm Grenzproblem Mbrad}. Its proof is given in four steps. First we prove
  variational characterizations for the values $\kappa_b^*,\kappa_{-\infty}^*$ which turn out to be more
  convenient than the original ones given by \eqref{Mbrad Gl Defn kappabrad} and \eqref{Mbrad Gl Defn
  kappa-inftyrad}. In the second step we use these characterizations to prove that minimizers of the
  functionals $I|_{\mathcal{M}_b^*}$ and $I|_{\mathcal{M}_{-\infty}^*}$ exist. In the third step we show that
  minimizers satisfy the corresponding Euler-Lagrange equation \eqref{Mbrad Gl DGL} or \eqref{Gl DGL bei
  -unendlich} so that the assertions (i) and (ii) of the theorem are proved. Finally we show part (iii) of the theorem.
  \medskip
  \medskip

  {\it Step 1: A more convenient variational characterization for $\kappa_b^*,\kappa_{-\infty}^*$ \;($n\geq 1$)}
  \medskip

  For $s,t>0$ and $u,v\in H^1_r(\R^n)$ with $u,v\neq 0$ one can check that $(su,tv)\in\mathcal{M}_b^*$
  is equivalent to $(s,t)$ being a critical point of the function $\beta_{u,v}$ defined
  on $\R_{>0}\times\R_{>0}$ and given by
  $$
    \beta_{u,v}(\tilde s,\tilde t)
    := I(\tilde su,\tilde tv)
    = \frac{\tilde s^2}{2}\|u\|^2 +\frac{\tilde t}{2}\|v\|_\omega^2
     - \frac{\tilde s^{2q}}{2q} \|u\|_{2q}^{2q} - \frac{\tilde t^{2q}}{2q} \|v\|_{2q}^{2q} - \frac{b\tilde
     s^q\tilde t^q}{q} \|uv\|_q^q.
  $$
  A necessary and sufficient condition for such a critical point to exist is given by
  \begin{equation} \label{Mbrad Gl Skalierungsbedingung}
     \|u\|_{2q}^q\|v\|_{2q}^q + b\|uv\|_q^q >0.
  \end{equation}
  Indeed, in this case the functional $-\beta_{u,v}$ is coercive so that $\beta_{u,v}$ has a global maximum.
  Moreover one can show that the Hessian of the function $(\tilde s, \tilde t)\mapsto \beta_{u,v}(\tilde
  s^{1/2q},\tilde t^{1/2q})$ is positive definite on $\R_{>0}\times\R_{>0}$ so that
  the maximum is strict and no other critical point can exist. On the other hand a short calculation shows
  that \eqref{Mbrad Gl Skalierungsbedingung} is also a necessary condition for the existence of a critical
  point. From
  \begin{align*}
    \max_{s,t>0} \beta_{u,v}(s,t)
    &= \max_{s,t>0} I(su,tv) \\
    &= \max_{\alpha>0} \max_{s>0} I(su,s\alpha v) \\
    &= \max_{\alpha>0} \max_{s>0}
       \frac{s^2}{2}(\|u\|^2 + \alpha^2 \|v\|_\omega^2)
       - \frac{s^{2q}}{2q} (\|u\|_{2q}^{2q} + \alpha^{2q} \|v\|_{2q}^{2q} +2b\alpha^q \|uv\|_q^q ).\\
    &=  \frac{q-1}{2q} \Big( \max_{\alpha>0}\frac{(\|u\|^2+\alpha^2\|v\|_\omega^2)^q
    }{\|u\|_{2q}^{2q}+\alpha^{2q}\|v\|_{2q}^{2q} + 2b\alpha^q \|uv\|_q^q} \Big)^{\frac{1}{q-1}}
  \end{align*}
  we obtain the following variational characterization for $\kappa_b^*$:
   \begin{align} \label{Mbrad Gl Charakerisierung kappabrad}
     \begin{aligned}
     &\kappa_b^*
     = \inf \Big\{ \frac{q-1}{2q} \hat J(u,v)^{\frac{1}{q-1}} : u,v\in H^1_r(\R^n),\, (u,v)
     \text{ satisfies }\eqref{Mbrad Gl Skalierungsbedingung}\Big\} \\
     &\text{where}\quad \hat J(u,v) =  \max_{\alpha>0}\frac{(\|u\|^2+\alpha^2\|v\|_\omega^2)^q
    }{\|u\|_{2q}^{2q}+\alpha^{2q}\|v\|_{2q}^{2q} + 2b\alpha^q \|uv\|_q^q}.
    \end{aligned}
   \end{align}
   Moreover if $(u,v)$ satisfies \eqref{Mbrad Gl Skalierungsbedingung} and minimizes $\hat J$ then $(su,tv)$
   is a minimizer of $I|_{\mathcal{M}_b^*}$ provided $(s,t)$ is the unique maximizer of $\beta_{u,v}$.
   Similarly, one can show
   \begin{align} \label{Gl Grenzproblem bei -unendlich}
     \begin{aligned}
     &\kappa_{-\infty}^*
     =  \inf \Big\{ \frac{q-1}{2q} \bar J(u,v)^{\frac{1}{q-1}} : u,v\in H^1_r(\R^n), u,v\neq 0,
     uv= 0 \Big\} \\
     &\text{where}\quad  \bar J(u,v)
     = \max_{\alpha>0} \frac{(\|u\|^2+\alpha^2\|v\|_\omega^2)^q}{\|u\|_{2q}^{2q}+\alpha^{2q}\|v\|_{2q}^{2q}}
     = \Big(\big(\frac{\|u\|_{~~}}{\|u\|_{2q}} \big)^{\frac{2q}{q-1}} +
       \big(\frac{\|v\|_{\omega~}}{\|v\|_{2q}} \big)^{\frac{2q}{q-1}} \Big)^{q-1}.
     \end{aligned}
  \end{align}
   Since the constraint $uv= 0$ is more restrictive than \eqref{Mbrad Gl Skalierungsbedingung} we obtain
   the inequality
   \begin{equation} \label{Gl Ungleichung kappabrad kappa-unendlichrad}
     \kappa_b^* \leq \kappa_{-\infty}^* \qquad( b\leq 0).
   \end{equation}
   Moreover, from \eqref{Mbrad Gl Charakerisierung kappabrad} it follows that the map $b\mapsto \kappa_b^*$ is
   nonincreasing.
   \medskip\medskip

   \noindent{\it Step 2: Existence of nonnegative minimizers}
   \medskip

   We prove that both $\kappa_b^*$ and $\kappa_{-\infty}^*$ are attained at nonnegative elements of
   $\mathcal{M}_b^*,\mathcal{M}_{-\infty}^*$, respectively. By the first step it suffices to show that the
   functionals $\hat J,\bar J$ defined in \eqref{Mbrad Gl Charakerisierung kappabrad},\eqref{Gl
   Grenzproblem bei -unendlich} admit fully nontrivial nonnegative minimizers. Since the reasonings for $\hat
   J$ and $\bar J$ are almost identical, we only give the proof for $\hat J$.
    \medskip

   Let $(u_j,v_j)$ be a minimizing sequence for $\hat J$ satisfying \eqref{Mbrad Gl Skalierungsbedingung}.
   Since $\hat J(u_j,v_j)=\hat J(s|u_j|,t|v_j|)$ for all $s,t>0$ we may assume ${u_j,v_j\geq 0}$ as well as
   $\|u_j\|_{2q}=\|v_j\|_{2q}=1$. Then $(u_j,v_j)$ is bounded and there is a subsequence $(u_j,v_j)$ that, due
   to the compactness of the embedding $H^1_r(\R^n)\to L^{2q}(\R^n)$, converges weakly, almost everywhere and
   in $L^{2q}(\R^n)\times L^{2q}(\R^n)$ to some nonnegative function $(u,v)$.
   This entails ${\|u\|_{2q}=\|v\|_{2q}=1}$ as well as $ u, v\geq
   0$. Furthermore, $(u,v)$ satisfies \eqref{Mbrad Gl Skalierungsbedingung} because otherwise $\hat
   J(u_j,v_j)$ would tend to infinity as $j\to\infty$ contradicting its property of a minimizing sequence.
   Hence, for all $\alpha>0$ we have
   \begin{align*}
      \frac{(\|u\|^2+\alpha^2\|v\|_\omega^2)^q
      }{\|u\|_{2q}^{2q}+\alpha^{2q}\|v\|_{2q}^{2q} + 2b\alpha^q \|uv\|_q^q}
     &\leq \liminf_{j\to\infty}  \frac{(\|u_j\|^2+\alpha^2\|v_j\|_\omega^2)^q
     }{\|u_j\|_{2q}^{2q}+\alpha^{2q}\|v_j\|_{2q}^{2q} + 2b\alpha^q \|u_jv_j\|_q^q}   \\
     &\leq \liminf_{j\to\infty}  \max_{\beta>0} \frac{(\|u_j\|^2+\beta^2\|v_j\|_\omega^2)^q
     }{\|u_j\|_{2q}^{2q}+\beta^{2q}\|v_j\|_{2q}^{2q} + 2b\beta^q \|u_jv_j\|_q^q}.
    \end{align*}
    Using \eqref{Mbrad Gl Charakerisierung kappabrad} we find $\hat J(u,v)\leq \liminf_{j\to\infty} \hat
    J(u_j,v_j)$ so that $(u,v)$ is a minimizer of~$\hat J$.
    \medskip
     \medskip

   \noindent{\it Step 3: The solution property}
   \medskip

   We prove the following two statements:
   \begin{itemize}\vspace{-0.2\baselineskip}
     \item[(i)] In case $b\leq 0$ every minimizer of $I|_{\mathcal{M}_b^*}$ is a solution of
     \eqref{Mbrad Gl DGL}.
     \item[(ii)] Every minimizer of $I|_{\mathcal{M}_{-\infty}^*}$ is a solution of
     \eqref{Gl DGL bei -unendlich}.
   \end{itemize}
   Let us show assertion (i) first. For $u,v\in H^1_r(\R^n)$ with $u,v\neq 0$ set
   $$
     H_1(u,v):= \|u\|^2- \|u\|_{2q}^{2q}-b\|uv\|_q^q,\quad\text{and}\quad
     H_2(u,v) := \|v\|_\omega^2-\|v\|_{2q}^{2q}-b\|uv\|_q^q.
   $$
   so that $(u,v)\in \mathcal{M}_b^*$ if and only if $H_1(u,v)=H_2(u,v)=0$. Now, if $(u,v)\in
   \mathcal{M}_b^*$ is a minimizer of $I|_{\mathcal{M}_b^*}$ then $H_1(u,v)=H_2(u,v)=0$ implies
    \begin{align*}
      H_1'(u,v)[u,0]
      &= 2\|u\|^2- 2q\|u\|_{2q}^{2q}- qb\|uv\|_q^q
       = (2-2q)\|u\|^2 + bq\|uv\|_q^q
       < 0, \\
      H_2'(u,v)[0,v]
      &= 2\|v\|_\omega^2-2q\|v\|_{2q}^{2q}-q b\|uv\|_q^q
        = (2-2q)\|v\|_\omega^2 + bq\|uv\|_q^q
       < 0
    \end{align*}
    so that Lagrange's multiplier rule shows that there are $L_1,L_2\in\R$ such that
    \begin{equation} \label{Gl ELequation}
      I'(u,v)+L_1 H_1'(u,v)+L_2 H_2'(u,v)=0.
    \end{equation}
    It suffices to show $L_1=L_2=0$.
    \medskip

    Using $(u,0),(0,v)$ as test functions in \eqref{Gl ELequation} we find
    $\skp{I'(u,v)}{(u,0)}=H_1(u,v)=0$ and $\skp{I'(u,v)}{(0,v)}=H_2(u,v)=0$ and thus
    \begin{align*}
       0 &= \Big((2-2q)\|u\|_{2q}^{2q}+(2-q)b\|uv\|_q^q\Big) L_1 - qb\|uv\|_q^q L_2,  \\
       0 &= \Big((2-2q)\|v\|_{2q}^{2q}+(2-q)b\|uv\|_q^q\Big) L_2 - qb\|uv\|_q^q L_1.
    \end{align*}
    Assume $(L_1,L_2)\neq (0,0)$. Then $\|uv\|_q^q>0$ and the determinant of this system vanishes. We
    therefore get
	\begin{align*}
	  0
	  &=  \Big((2-2q)\|u\|_{2q}^{2q}+(2-q)b\|uv\|_q^q\Big)\cdot \Big((2-2q)\|v\|_{2q}^{2q}+(2-q)b\|uv\|_q^q\Big)
	  - (qb\|uv\|_q^q)^2 \\
	  &= 4(1-q) \Big((b\|uv\|_q^q)^2  -
	  \frac{q-2}{2} b\|uv\|_q^q(\|u\|_{2q}^{2q}+\|v\|_{2q}^{2q}) - (q-1)\|u\|_{2q}^{2q}\|v\|_{2q}^{2q} \Big).
	\end{align*}
	Solving for $b\|uv\|_q^q<0$ gives
	\begin{align*}
	  4b\|uv\|_q^q
	  &= (q-2)(\|u\|_{2q}^{2q}+\|v\|_{2q}^{2q})-\sqrt{(q-2)^2(\|u\|_{2q}^{2q}+\|v\|_{2q}^{2q})^2+
	  16(q-1)\|u\|_{2q}^{2q}\|v\|_{2q}^{2q}}.
	\end{align*}
	Let now $A,B>0$ be given by $\|u\|_{2q}^{2q} = A\cdot |b|\|uv\|_q^q,\|v\|_{2q}^{2q}=B\cdot
	|b|\|uv\|_q^q$.
	Then
	\begin{equation} \label{Mbrad Gl AB}
      -4 = (q-2)(A+B) - \sqrt{(q-2)^2(A+B)^2+16(q-1)AB}.
    \end{equation}
	where $A,B$ are larger than 1 because of
    \begin{equation*}
      \|u\|_{2q}^{2q}-|b|\|uv\|_q^q = \|u\|^2 > 0,\qquad
      \|v\|_{2q}^{2q}-|b|\|uv\|_q^q = \|v\|_\omega^2 > 0.
    \end{equation*}
    Solving \eqref{Mbrad Gl AB} for $B$ we obtain
    $$
      B = \frac{2+(q-2)A}{2(q-1)A-(q-2)}
    $$
    so that $A>1$ implies $B<1$, a contradiction. Hence, the assumption was false, i.e. ${I'(u,v)=0}$.
    \medskip

    Now consider (ii). Let $(\bar u,\bar v)\in\mathcal{M}_{-\infty}^*$ be a minimizer of
    the functional $I|_{\mathcal{M}_{-\infty}^*}$. Due to the one-dimensional Sobolev embedding we may
    choose $\bar u,\bar v$ to be a continuous function on $\R^n\sm\{0\}$ so that the sets $\{\bar u\neq
    0\},\{\bar v\neq 0\}$ are open. According to the first step we have $\bar J(\bar
    u,\bar v)\leq \bar J(\bar u+\varphi,\bar v+\psi)$ for all test functions $\varphi,\psi$ with
    $\supp(\varphi)~\subset~\supp(u)$ and $\supp(\psi)\subset \supp(v)$. In view of the second formula for
    $\bar J$ in \eqref{Gl Grenzproblem bei -unendlich} we find that $(\bar u,\bar v)$ solves~\eqref{Gl DGL bei
    -unendlich}. \qed

    \begin{bem}
      The above reasoning shows that all critical points and not only minimizers of
      $I|_{\mathcal{M}_b^*}$ or $I|_{\mathcal{M}_{-\infty}^*}$ satisfy the corresponding
      Euler-Lagrange equation.
    \end{bem}
    \medskip

    \noindent{\it Step 4: Convergence to a fully nontrivial solution of \eqref{Gl DGL bei -unendlich} as $b\to
    -\infty$}
    \medskip

  Let $(b_j)$ be a sequence such that $b_j\to -\infty$ and let $(u_j,v_j)\in\mathcal{M}_{b_j}^*$ be a sequence
  of nonnegative fully nontrivial solutions of \eqref{Mbrad Gl DGL} given by the second step, in
  particular ${I(u_j,v_j)=\kappa_{b_j}^*}$. Then $(u_j,v_j)$ is bounded and there is a subsequence
  $(u_j,v_j)$ that, due to the compactness of the embedding $H^1_r(\R^n)\to L^{2q}(\R^n)$, converges
  weakly, almost everywhere and in ${L^{2q}(\R^n)\times L^{2q}(\R^n)}$ to some nonnegative function $(\bar
  u,\bar v)$. From Sobolev's inequality we get
  \begin{align*}
     \|u_j\|^2
        &= \|u_j\|_{2q}^{2q}+b_j\|u_jv_j\|_q^q
        \leq \|u_j\|_{2q}^{2q}
        \leq C \|u_j\|^{2q}, \\
        \|v_j\|_\omega^2
        &= \|v_j\|_{2q}^{2q}+b_j\|u_jv_j\|_q^q
        \leq \|v_j\|_{2q}^{2q}
        \leq C \|v_j\|_\omega^{2q}
  \end{align*}
  and thus $\|u_j\|_{2q},\|v_j\|_{2q}\geq c>0$ where $c,C$ are positive numbers which do not depend
  on~$j$. It follows $\|\bar u\|_{2q},\|\bar v\|_{2q}\geq c$ and thus $\bar u,\bar v\neq 0$. In addition we
  find
  \begin{equation}  \label{Gl II Grenzproblem Mbrad}
    \|\bar u\|^2\leq \|\bar u\|_{2q}^{2q},\qquad
    \|\bar v\|_\omega^2\leq \|\bar v\|_{2q}^{2q}.
  \end{equation}
  Since the sequence $(u_j,v_j)$ is bounded we get $\bar u\bar v\equiv 0$ from
  $$
         \|\bar u\bar v\|_q^q
         = \lim_{j\to\infty} \|u_jv_j\|_q^q
         = \lim_{j\to\infty} (\|u_j\|_{2q}^{2q}- \|u_j\|^2)|b_j|^{-1}
         \leq \liminf_{j\to\infty} C\cdot |b_j|^{-1}
         = 0.
  $$
  Furthermore, from \eqref{Gl Ungleichung kappabrad kappa-unendlichrad} we obtain $\kappa_{b_j}^*\leq
  \kappa_{-\infty}^*$ so that \eqref{Gl für (u,v) in Nehari} implies
  \begin{align*}
     \frac{q-1}{2q}(\| \bar u\|^2+\|\bar v\|_\omega^2)
       &\leq \frac{q-1}{2q} \liminf_{j\to\infty} (\|u_j\|^2+\|v_j\|_\omega^2) \\
       &= \liminf_{j\to\infty} \kappa_{b_j}^*   \\
       &\leq \limsup_{j\to\infty} \kappa_{b_j}^* \\
       &\leq \kappa_{-\infty}^*  \\
       &\leq  \frac{q-1}{2q} \Big(  \big(\frac{\|\bar  u\|_{~~}}{\| \bar u\|_{2q}}\big)^{\frac{2q}{q-1}}
         + \big(\frac{\|\bar v\|_\omega~}{\|\bar v\|_{2q}}\big)^{\frac{2q}{q-1}} \Big)  \\
       &\leq \frac{q-1}{2q} (\|\bar u\|^2 + \|\bar  v\|_\omega^2)
  \end{align*}
  where we used  \eqref{Gl Grenzproblem bei -unendlich} and \eqref{Gl II Grenzproblem Mbrad} in the last two
  inequalities. Hence, equality occurs in each line and thus $\|\bar u\|^2= \|\bar u\|_{2q}^{2q}$, $\|\bar
  v\|_\omega^2= \|\bar v\|_{2q}^{2q}$ as well as $\kappa_{b_j}^* \to \kappa_{-\infty}^*$, $(u_j,v_j)\to (\bar
  u,\bar v)$ as $b\to -\infty$. This entails $(\bar u,\bar v)\in\mathcal{M}_{-\infty}^*$ and $I(\bar
  u,\bar v)=\kappa_{-\infty}^*$ so that $(\bar u,\bar v)$ is a minimizer of
  $I|_{\mathcal{M}_{-\infty}^*}$ and thus a fully nontrivial nonnegative solution of \eqref{Gl DGL bei
  -unendlich}. Finally, the assertion follows from
  $$
    \limsup_{j\to\infty} |b_j|\|u_jv_j\|_q^q
    = \limsup_{j\to\infty} \|u_j\|_{2q}^{2q}-\|u_j\|^2
     = \|\bar u\|_{2q}^{2q}-\|\bar u\|^2
     = 0.
  $$

  \section{Proof of Theorem \ref{Mbrad Thm Kompaktheit n=1} and Corollary \ref{Mbrad Kor hinreichendes Krit
  Mbrad n=1}}

  {\it  Proof of (i)}
  \medskip

  First we show $\kappa_{-\infty}^*\geq (2+\omega^{\frac{q+1}{q-1}})c_0$ and that no element of
  $\mathcal{M}_{-\infty}^*$ attains this value. Let $(u,v)\in\mathcal{M}_{-\infty}^*$ and in particular
  $u(0)v(0)=0$. We first assume $u(0)=0$. Then the nontrivial functions $u_l:= u\cdot 1_{(-\infty,0)},u_r:=
  u\cdot 1_{(0,\infty)}$ lie in $H^1(\R)$, they have disjoint support and  satisfy $u_r(r)=u_l(-r)$ due
  to $u\in H^1_r(\R)$. In particular from $\|u\|^2=\|u\|_{2q}^{2q}$ we infer
  $$
    \|u_l\|^2 = \|u_r\|^2
    = \frac{1}{2}\|u\|^2
    = \frac{1}{2}\|u\|_{2q}^{2q}
    = \|u_l\|_{2q}^{2q} = \|u_r\|_{2q}^{2q}.
  $$
  This implies $(u_r,0),(u_l,0),(0,v)\in\mathcal{N}_b$ and using \eqref{Gl für (u,v) in Nehari} as well as
  $uv\equiv 0$ we obtain
  \begin{align*}
    I(u,v)
    &= I(u_l,0) + I(u_r,0) + I(0,v) \\
    &= \frac{q-1}{2q}\cdot \big( \|u_l\|^2 + \|u_r\|^2 + \|v\|_\omega^2 \big) \\
    &= \frac{q-1}{2q}\cdot \Big( \big( \frac{\|u_l\|_{~~}}{\|u_l\|_{2q}}\big)^{\frac{2q}{q-1}} +
      \big( \frac{\|u_r\|_{~~}}{\|u_r\|_{2q}}\big)^{\frac{2q}{q-1}}
      + \big( \frac{\|v\|_{\omega~}}{\|v\|_{2q}}\big)^{\frac{2q}{q-1}} \Big).
    \intertext{Since the functions $u_0,v_0$ minimize the quotiens
    $\frac{\|u\|_{~~}}{\|u\|_{2q}},\frac{\|v\|_{\omega~}}{\|v\|_{2q}}$ we get}
    I(u,v)
    &\geq \frac{q-1}{2q}\cdot \Big( 2\cdot \big( \frac{\|u_0\|_{~~}}{\|u_0\|_{2q}}\big)^{\frac{2q}{q-1}}
      + \big( \frac{\|v_0\|_{\omega~}}{\|v_0\|_{2q}}\big)^{\frac{2q}{q-1}} \Big) \\
    &= \frac{q-1}{2q}\cdot \big(2\|u_0\|^2+\|v_0\|_\omega^2 \big) \\
    &= 2I(u_0,0) + I(0,v_0) \\
    &= (2+\omega^{\frac{q+1}{q-1}})c_0.
  \end{align*}
  Analogously the assumption $v(0)=0$ leads to
  $$
    I(u,v)
    \geq (1+2\omega^{\frac{q+1}{q-1}})c_0
    \geq (2+\omega^{\frac{q+1}{q-1}})c_0.
  $$
  We therefore get $\kappa_{-\infty}^* \geq (2+\omega^{\frac{q+1}{q-1}})c_0$. Moreover we find that
  $\kappa_{-\infty}^*$ is not attained at any element of $\mathcal{M}_{-\infty}^*$ because in case $u(0)=0$
  this would lead to the conclusion that $u_r,u_l$ are translates of $u_0$ which is impossible because of
  $\supp(u_r)\cap \supp(u_l)=\emptyset$. A similar reasoning shows that no element $(u,v)$ of $\mathcal{M}_{-\infty}^*$
  with $v(0)=0$ can have energy $(2+\omega^{\frac{q+1}{q-1}})c_0$.
  \medskip

  Now let us prove the opposite inequality. To this end let $\chi_k:=\chi(k^{-1}\cdot)$ denote a suitable
  radially symmetric cut-off function with $\chi\equiv 1$ in $[-1,1]$ and $\chi\equiv 0$ outside of $(-2,2)$
  then the sequence
  $$
    (u_k,v_k):= \Big( (u_0\chi_k)(2k+\cdot)+(u_0\chi_k)(-2k+\cdot),v_0\chi_k \Big)
  $$
  lies in $\mathcal{M}_{-\infty}^*$ and
  $$
    \lim_{k\to\infty} I(u_k,v_k)
    = \lim_{k\to\infty} \big( 2  I(u_0\chi_k,0) + I(0,v_0\chi_k) \big)
    =  2 I(u_0,0) + I(0,v_0)
    = (2+\omega^{\frac{q+1}{q-1}})c_0
  $$
  which proves $\kappa_{-\infty}^* \leq (2+\omega^{\frac{q+1}{q-1}})c_0$. Hence, we obtain
  $$
    \kappa_{-\infty}^* = (2+\omega^{\frac{q+1}{q-1}})c_0.
  $$
  \medskip

  {\it Proof of (ii)}
  \medskip

  First we prove that $b<b^*(\omega,q)$ implies $\kappa_b^*=\kappa_{-\infty}^*$ and that $0\geq
  b>b^*(\omega,q)$ implies ${\kappa_b^*<\kappa_{-\infty}^*}$. From (i) and the variational characterization
  for $\kappa_b^*$ given by \eqref{Mbrad Gl Charakerisierung kappabrad} we get $\kappa_b^*<\kappa_{-\infty}^*$ if
  and only if there are functions $u,v\in H^1_r(\R^n)$ with $u,v\neq 0$ and $\|u\|_{2q}^q\|v\|_{2q}^q>|b|\|uv\|_q^q$ that satisfy
  $$
     \max_{\alpha>0}
     \frac{(\|u\|^2+\alpha^2\|v\|_\omega^2)^q}{\|u\|_{2q}^{2q}+\alpha^{2q}\|v\|_{2q}^{2q}+2b\alpha^q\|uv\|_q^q}
     < \Big(\frac{2q}{q-1}\cdot (2+\omega^{\frac{q+1}{q-1}})c_0\Big)^{q-1}
     = (2+\omega^{\frac{q+1}{q-1}})^{q-1} \frac{\|u_0\|^{2q}}{\|u_0\|_{2q}^{2q}}.
  $$
 In this case (i) and \eqref{Gl Grenzproblem bei -unendlich} implies $uv\neq 0$ and thus $b>b^*(\omega,q)$
 after some rearrangements of the above inequality. A short argument shows that
 $0\geq b>b^*(\omega,q)$ implies $\kappa_b^*<\kappa_{-\infty}^*$.
 \medskip

  Let $b<b^*(\omega,q)$ and assume that $\kappa_b^*=\kappa_{-\infty}^*$ is attained at some function $(u,v)$
  satisfying the condition \eqref{Mbrad Gl Skalierungsbedingung}. Then $uv\neq 0$ since $\kappa_{-\infty}^*$
  is not attained, see (i). Choose $\eps>0$ such that $b+\eps<b^*(\omega,q)$ so that $ \kappa_{b+\eps}^* =
  \kappa_{-\infty}^*$. Then $(u,v)$ satisfies \eqref{Mbrad Gl Skalierungsbedingung} for $b+\eps$
  instead of $b$ and we get
  \begin{align*}
    \kappa_b^*
    &= \frac{q-1}{2q} \Big( \max_{\alpha>0} \frac{(\|u\|^2 +
    \alpha^2\|v\|_\omega^2)^q}{\|u\|_{2q}^{2q} + \alpha^{2q}\|v\|_{2q}^{2q} + 2b\alpha^q\|uv\|_q^q}
    \Big)^{\frac{1}{q-1}}  \\
    &>  \frac{q-1}{2q} \Big( \max_{\alpha>0} \frac{(\|u\|^2 +
    \alpha^2\|v\|_\omega^2)^q}{\|u\|_{2q}^{2q} + \alpha^{2q}\|v\|_{2q}^{2q} + 2(b+\eps)\alpha^q\|uv\|_q^q}
    \Big)^{\frac{1}{q-1}}  \\
    &\geq \kappa_{b+\eps}^* \\
    &= \kappa_{-\infty}^*
  \end{align*}
  which contradicts $\kappa_b^*=\kappa_{-\infty}^*$. Hence, $\kappa_b^*$ is not attained for
  $b<b^*(\omega,q)$.
  \medskip
  \medskip

  {\it Proof of (iii)}

   In order to prove (iii) we suppose $0\geq b>b^*(\omega,q)$. From the first statement in the proof of (ii)
   it follows that this implies
   \begin{equation}\label{Gl Ungl kappab kappa-infty}
     \kappa_b^*<\kappa_{-\infty}^*=(2+\omega^{\frac{q+1}{q-1}})c_0.
   \end{equation}
   For these values of $b$ let us investigate the behaviour of a special minimizing sequence for
   the functional $I|_{\mathcal{M}_b^*}$. We consider the corresponding problem on balls $B_R=(-R,R)$ where
   $R$ will be sent to infinity. We set $H^1_{0,r}(B_R) := \{ u\in H_0^1(B_R) : u \text{ is radially
   symmetric} \}$. All solutions $(u,v)\in H^1_{0,r}(B_R)\times H^1_{0,r}(B_R)$ of the boundary value problem
  \begin{align} \label{Mbrad Gl DGL uR vR}
    \begin{gathered}
       - u'' + ~~~ u = |u|^{2q-2}u + b|u|^{q-2}u|v|^q\quad\text{in }B_R, \\
       - v'' + \omega^2 v = |v|^{2q-2}v + b|v|^{q-2}v|u|^q\quad\text{in }B_R, \\
       u(-R)=u(R) = 0,\quad  v(-R)=v(R)=0
    \end{gathered}
  \end{align}
  satisfy
  \begin{align}
    \int_{B_R} |u'|^2+ ~~~u^2\dx &= \int_{B_R} |u|^{2q}+b|u|^q|v|^q\dx,
    \label{Mbrad Gl MbstarR Bedingung 1}  \\
     \int_{B_R} |v'|^2+\omega^2 v^2\dx &= \int_{B_R} |v|^{2q}+b|u|^q|v|^q\dx. \label{Mbrad Gl MbstarR
     Bedingung 2}
  \end{align}
  Following the approach of the last section we define
  \begin{align*}
    \mathcal{M}_b^*(R)
    &:= \Big\{ (u,v)\in H^1_{0,r}(B_R)\times H^1_{0,r}(B_R) : u,v\neq 0,\, (u,v) \text{ satisfies }
    \eqref{Mbrad Gl MbstarR Bedingung 1},\eqref{Mbrad Gl MbstarR Bedingung 2} \Big\}.
  \end{align*}
  As before one can show that $\inf I|_{\mathcal{M}_b^*(R)}$ admits a variational
  characterization given by
  \begin{align} \label{Mbrad Gl Charakterisierung kappabradR}
    \kappa_b^*(R)
    := \inf_{\mathcal{M}_b^*(R)} I
    = \inf \Big\{ \frac{q-1}{2q} \hat J(u,v)^{\frac{1}{q-1}} : u,v\in
    H^1_{0,r}(B_R),\,(u,v) \text{ satisfies }\eqref{Mbrad Gl Skalierungsbedingung}\Big\}.
  \end{align}
  Using the compactness of the embedding $H^1_{0,r}(B_R)\to L^{2q}(B_R)$ for all $R>0$ we obtain the following
  result:
  \medskip

   \begin{prop}\label{Mbrad Prop Mbrad-Minimierung n=1 Approx durch Kugeln}
     Let $n=1$. For all $b\leq 0$ the value $\kappa_b^*(R)$ is attained at a fully nontrivial nonnegative
     solution $(u_R,v_R)\in H_{0,r}^1(B_R)\times H_{0,r}^1(B_R)$ of \eqref{Mbrad Gl DGL uR vR}. Moreover,
     as $R\to\infty$ we have $\kappa_b^*(R)\to \kappa_b^*$.
   \end{prop}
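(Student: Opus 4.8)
The plan is to prove the two assertions separately. First, for the existence of a minimizer of $I|_{\mathcal{M}_b^*(R)}$, I would mimic Steps~1--3 of the proof of Theorem~\ref{Mbrad Thm Grenzproblem Mbrad} verbatim, with $H^1_r(\R^n)$ replaced throughout by $H^1_{0,r}(B_R)$. The variational characterization \eqref{Mbrad Gl Charakterisierung kappabradR} is obtained by the scaling argument of Step~1 (the function $\beta_{u,v}$ and condition \eqref{Mbrad Gl Skalierungsbedingung} make sense in exactly the same way on $B_R$). Then I would take a minimizing sequence $(u_j,v_j)$ for $\hat J$ in $H^1_{0,r}(B_R)$, normalize $u_j,v_j\geq 0$ with $\|u_j\|_{2q}=\|v_j\|_{2q}=1$; boundedness of the sequence follows as before, and since the embedding $H^1_{0,r}(B_R)\hookrightarrow L^{2q}(B_R)$ is compact (this is the Rellich--Kondrachov theorem on the bounded domain $B_R$, with no radiality needed), I extract a subsequence converging weakly, a.e., and in $L^{2q}(B_R)^2$ to some nonnegative $(u,v)$ with $\|u\|_{2q}=\|v\|_{2q}=1$. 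The liminf argument of Step~2 then shows $(u,v)$ satisfies \eqref{Mbrad Gl Skalierungsbedingung} and minimizes $\hat J$, so after rescaling by the unique maximizer of $\beta_{u,v}$ we obtain a minimizer $(u_R,v_R)\in\mathcal{M}_b^*(R)$; the Lagrange-multiplier computation of Step~3(i) carries over unchanged (it is purely algebraic and uses only that $A,B>1$), giving that $(u_R,v_R)$ solves \eqref{Mbrad Gl DGL uR vR}.

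Second, for the convergence $\kappa_b^*(R)\to\kappa_b^*$ as $R\to\infty$, I would show both inequalities. The inequality $\kappa_b^*(R)\geq\kappa_b^*$ for every $R$ is immediate from the two variational characterizations \eqref{Mbrad Gl Charakerisierung kappabrad} and \eqref{Mbrad Gl Charakterisierung kappabradR}, since $H^1_{0,r}(B_R)\subset H^1_r(\R^n)$ (extend by zero), so the infimum defining $\kappa_b^*(R)$ is taken over a smaller set; in particular $\liminf_{R\to\infty}\kappa_b^*(R)\geq\kappa_b^*$. For the reverse direction $\limsup_{R\to\infty}\kappa_b^*(R)\leq\kappa_b^*$, I would fix $\eps>0$, pick $(u,v)\in H^1_r(\R)^2$ satisfying \eqref{Mbrad Gl Skalierungsbedingung} with $\tfrac{q-1}{2q}\hat J(u,v)^{1/(q-1)}<\kappa_b^*+\eps$, and truncate: with the cut-off $\chi_k$ from the proof of part~(i) of Theorem~\ref{Mbrad Thm Kompaktheit n=1}, set $(u^{(k)},v^{(k)}):=(u\chi_k,v\chi_k)\in H^1_{0,r}(B_{2k})$. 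Since $u\chi_k\to u$ and $v\chi_k\to v$ in $H^1_r(\R)$ and in $L^{2q}$, and since both the numerator and denominator of the Rayleigh-type quotient defining $\hat J$ depend continuously on $(u,v)$ in the $H^1\times L^{2q}$ topology (here one uses that $\|u^{(k)}v^{(k)}\|_q\to\|uv\|_q$), condition \eqref{Mbrad Gl Skalierungsbedingung} holds for $(u^{(k)},v^{(k)})$ once $k$ is large and $\hat J(u^{(k)},v^{(k)})\to\hat J(u,v)$. Hence $\kappa_b^*(2k)\leq\tfrac{q-1}{2q}\hat J(u^{(k)},v^{(k)})^{1/(q-1)}<\kappa_b^*+2\eps$ for $k$ large, and monotonicity of $R\mapsto\kappa_b^*(R)$ (again from $H^1_{0,r}(B_R)\subset H^1_{0,r}(B_{R'})$ for $R<R'$) upgrades this to $\limsup_{R\to\infty}\kappa_b^*(R)\leq\kappa_b^*+2\eps$. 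Letting $\eps\to0$ finishes the argument.

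The step I expect to be the main obstacle is verifying that the cut-off sequence $(u^{(k)},v^{(k)})$ still satisfies the admissibility condition \eqref{Mbrad Gl Skalierungsbedingung}, i.e.\ that $\|u^{(k)}\|_{2q}^q\|v^{(k)}\|_{2q}^q+b\|u^{(k)}v^{(k)}\|_q^q>0$. This is where one genuinely needs the convergences $\|u^{(k)}\|_{2q}\to\|u\|_{2q}$, $\|v^{(k)}\|_{2q}\to\|v\|_{2q}$, $\|u^{(k)}v^{(k)}\|_q\to\|uv\|_q$ (the last by dominated convergence, using $|u\chi_k v\chi_k|\leq|uv|\in L^q$), so that the strict inequality \eqref{Mbrad Gl Skalierungsbedingung} for the limit $(u,v)$ is inherited by $(u^{(k)},v^{(k)})$ for all sufficiently large~$k$. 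Everything else is a routine transcription of the arguments already given for $n\geq 2$, with the key point being simply that on a bounded interval compactness of $H^1_0\hookrightarrow L^{2q}$ is automatic and no symmetry is required.
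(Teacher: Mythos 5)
Your proposal is correct and follows essentially the same route as the paper: the existence part is exactly the paper's Steps 1--3 transplanted to $H^1_{0,r}(B_R)$ (the paper simply delegates this with a one-line reference, relying on the same compact embedding), and the convergence part uses the same inclusion argument for $\kappa_b^*(R)\geq\kappa_b^*$ and the same cut-off construction for the reverse inequality. The only cosmetic difference is that the paper chooses $\chi_R$ supported directly in $B_R$ so that no appeal to monotonicity of $R\mapsto\kappa_b^*(R)$ is needed, whereas your truncation at scale $k$ lands in $B_{2k}$ and you correctly patch this with the (valid) monotonicity observation.
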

   \begin{proof}
     The existence of a fully nontrivial nonnegative minimizer of $I|_{\mathcal{M}_b^*(R)}$ can be shown as
     in the second step in the proof of Theorem \ref{Mbrad Thm Grenzproblem Mbrad}. From the inclusion
     $\mathcal{M}_b^*(R)\subset \mathcal{M}_b^*$ it follows
     \begin{equation} \label{Mbrad Gl Monotonie kappabradR}
       \kappa_b^*(R)\geq \kappa_b^*.
     \end{equation}
     In order to show $\kappa_b^*(R)\to \kappa_b^*$ as $R\to\infty$ we choose a cut-off function $\chi$ with
     $\chi(x)=1$ for $|x|\leq \frac{1}{2}$ and $\chi(x)=0$ für $|x|\geq 1$, set $\chi_R(x):=\chi(R^{-1}x)$.
     Then for all $u,v\in H^1_r(\R)$ we have $u\chi_R,v\chi_R\in H^1_{0,r}(B_R)$. Moreover, if in addition
     $(u,v)$ satisfies \eqref{Mbrad Gl Skalierungsbedingung} so does $(u\chi_R,v\chi_R)$ for
     sufficiently large $R>0$ and we get from \eqref{Mbrad Gl Charakterisierung kappabradR} the inequality
     $$
       \limsup_{R\to\infty} \kappa_b^*(R)
       \leq \frac{q-1}{2q} \limsup_{R\to\infty} \hat J(u\chi_R,v\chi_R)^{\frac{1}{q-1}}
       = \frac{q-1}{2q} \hat J(u,v)^{\frac{1}{q-1}}.
     $$
     Since this holds for all $u,v\in H^1_r(\R)$ satisfying \eqref{Mbrad Gl Skalierungsbedingung} we obtain
     from \eqref{Mbrad Gl Charakerisierung  kappabrad} the estimate
     \begin{equation} \label{Mbrad Gl Asymptotik kappabradR}
       \limsup_{R\to\infty} \kappa_b^*(R) \leq \kappa_b^*.
     \end{equation}
     The inequalities \eqref{Mbrad Gl Monotonie kappabradR} and \eqref{Mbrad Gl Asymptotik kappabradR}
     show $\kappa_b^*(R)\to \kappa_b^*$ as $R\to\infty$.
   \end{proof}
   \medskip

  Let now $(u_k,v_k):=(u_{R_k},v_{R_k})$ be the sequence of solutions on $(-R_k,R_k)$ given by
  Proposition~\ref{Mbrad Prop Mbrad-Minimierung n=1 Approx durch Kugeln} where $(R_k)$ is a fixed positive
  sequence going off to infinity as $k\to\infty$. Then $(u_k,v_k)$ lies in
  $\mathcal{M}_b^*(R_k)\subset\mathcal{M}^*_b$ and we have $I(u_k,v_k)=\kappa_b^*(R_k)\to \kappa_b^*$ as
  $k\to\infty$ by Proposition \ref{Mbrad Prop Mbrad-Minimierung n=1 Approx durch Kugeln}.
  Since $(u_k,v_k)$ solves
  \eqref{Mbrad Gl DGL uR vR} on $(-R_k,R_k)$ there is a real number $\alpha_k$ such that
     \begin{align} \label{Mbrad Gl Existenzres b kleiner 0 n gleich 1 I}
       -u_k'^2 - v_k'^2 + u_k^2 + \omega^2 v_k^2
       - \frac{1}{q} (u_k^{2q}+v_k^{2q} + 2bu_k^qv_k^q) = \alpha_k \qquad \text{in } (-R_k,R_k)
     \end{align}
   and $u_k(R_k)=v_k(R_k)=0$ implies $\alpha_k\leq 0$, see Proposition \ref{Mbrad Prop Identität n=1} for the
   proof of a related result. The sequence $(u_k,v_k)$ is bounded in $H_r^1(\R)\times H_r^1(\R)$ and we may
   choose a subsequence again denoted by $(u_k,v_k)$ that converges weakly to some $(u,v)\in H^1_r(\R)\times
   H^1_r(\R)$. Then $R_k\to\infty$ implies that $(u,v)$ is a nonnegative solution of \eqref{Mbrad Gl DGL}
   with $I(u,v)\leq \kappa_b^*$. It remains to show $u,v\neq 0$.
   \medskip

   We assume $u=0$. Since $u_k,v_k$ are  radially symmetric we have $u_k'(0)=v_k'(0)=0$. From $b\leq 0$ and
   \eqref{Mbrad Gl Existenzres b kleiner 0 n gleich 1 I} we get the inequality
   $$
     0\geq \alpha_k \geq u_k(0)^2 (q-u_k(0)^{2q-2}) +  v_k(0)^2 (q\omega^2 -v_k(0)^{2q-2}).
   $$
   From $u_k(0)\to u(0)=0$ it follows $v_k(0)\geq (q\omega^2)^{\frac{1}{2q-2}}$ for almost all $k$
   and hence $v(0)>0$. It follows that $(u,v)=(0,v)$ is a solution of \eqref{Mbrad Gl DGL} satisfying
   $v(0)>0$ as well as $v\geq 0$. Kwong's uniqueness result \cite{Kwo_Uniqueness_of_positive} gives $v=v_0$
   and we obtain
     \begin{equation} \label{Mbrad Gl Konvergenz uk vk}
       (u_k,v_k) \wto (0,v_0),\qquad (u_k,v_k) \to (0,v_0) \text{ in }C^2_{loc}(\R).
     \end{equation}
     \medskip

   Let now $x_k\in [0,R_k)$ be given by
   $$
     \max_{[-R_k,R_k]} u_k = u_k(x_k)=u_k(-x_k)>0.
   $$
   From the differential equation \eqref{Mbrad Gl DGL uR vR} and $b\leq 0$ we infer
   $$
     0
     \leq - \frac{u_k''(x_k)}{u_k(x_k)}
     =  \frac{u_k(x_k)^{2q-1}+bu_k(x_k)^{q-1}v_k(x_k)^q-u_k(x_k)}{u_k(x_k)}
     \leq  u_k(x_k)^{2q-2}  - 1
   $$
   and thus
   \begin{equation} \label{Mbrad Gl Hoehe Max uk}
     u_k(x_k)=u_k(-x_k)\geq 1.
   \end{equation}
   From \eqref{Mbrad Gl Konvergenz uk vk},\eqref{Mbrad Gl Hoehe Max uk} we get $x_k\to +\infty$. Let
   now  $(\tilde u_k^+,\tilde v_k^+),(\tilde u_k^-,\tilde v_k^-)$ be given by
   $$
       (\tilde u_k^+,\tilde v_k^+):=(u_k(\cdot + x_k),v_k(\cdot +x_k)),\qquad
       (\tilde u_k^-,\tilde v_k^-):=(u_k(\cdot -x_k),v_k(\cdot - x_k)).
     $$
   These sequences are bounded in $H^1(\R)\times H^1(\R)$ and there are subsequences again denoted by
   $(\tilde u_k^+,\tilde v_k^+),(\tilde u_k^-,\tilde v_k^-)$ that converge weakly and locally uniformly to
   nonnegative functions $(\tilde u^+,\tilde v^+),(\tilde u^-,\tilde v^-)$, respectively.
   The inequality \eqref{Mbrad Gl Hoehe Max uk} implies $\tilde u^+(0), \tilde u^-(0)>0$.
   Since the functions $(\tilde u^+,\tilde v^+),(\tilde u^-,\tilde v^-)$ are nontrivial
   solutions of \eqref{Mbrad Gl DGL} on $(-\infty,a),(-a,\infty)$, respectively where $a:= \lim_{k\to\infty}
   (R_k-x_k)$ we obtain $(\tilde u^\pm,\tilde v^\pm)\in\mathcal{N}_b$ and \eqref{Gl für (u,v) in Nehari} gives
     \begin{equation} \label{Mbrad Gl Energieabschaetzung tildeu tilde v}
	   \frac{q-1}{2q}(\|\tilde u^\pm\|^2 + \|\tilde v^\pm\|_\omega^2)
	    = I(\tilde u^\pm,\tilde v^\pm)
       \geq \min_{\mathcal{N}_b} I
       = I(u_0,0)
       = c_0.
     \end{equation}
     \medskip

   Now let $\chi$ denote a cut-off-function satisfying $\chi(x)=1$ for $|x|\leq 1$ and $\chi(x)=0$ for
   $|x|\geq 2$, set $\chi_R(x):=\chi(R^{-1}x)$. Choosing $k_0(R)$ sufficiently large we
   obtain $x_k>2R$ for all $k\geq k_0(R)$. In particular  for all $k\geq
   k_0(R)$ the sets $\supp(\chi_R)$, $\supp(\chi_R(\cdot-x_k))$, $\supp(\chi_R(\cdot+x_k))$ are pairwise
   disjoint and we get
     \begin{align*}
       &\|(u_k,v_k)\|^2 \\
       &\geq \|(u_k\chi_R,v_k\chi_R)\|^2 + \|(u_k\chi_R(\cdot-x_k),v_k\chi_R(\cdot-x_k))\|^2
         + \|(u_k\chi_R(\cdot+x_k),v_k\chi_R(\cdot+x_k))\|^2   \\
       &= \|(u_k\chi_R,v_k\chi_R)\|^2 + \|(\tilde u_k^+\chi_R,\tilde v_k^+\chi_R)\|^2
         + \|(\tilde u_k^-\chi_R,\tilde v_k^-\chi_R)\|^2.
     \end{align*}
     From $(u_k,v_k)\wto (0,v_0), (\tilde u_k^+,\tilde v_k^+)\wto (\tilde u^+,\tilde v^+)$ and
     $(\tilde u_k^+,\tilde v_k^+)\wto (\tilde u^-,\tilde v^-)$  we infer
     \begin{align*}
       \liminf_{k\to\infty} \|(u_k,v_k)\|^2
       &\geq \|(0,v_0\chi_R)\|^2 +
         \|(\tilde u^+\chi_R,\tilde v^+\chi_R)\|^2 +  \|(\tilde u^-\chi_R,\tilde v^-\chi_R)\|^2  \\
       &= \|v_0\chi_R\|_\omega^2 +  \|(\tilde u^+\chi_R,\tilde v^+\chi_R)\|^2 +  \|(\tilde u^-\chi_R,\tilde
       v^-\chi_R)\|^2.
     \end{align*}
     Since this inequality holds for all $R>0$ we obtain
     \begin{align*}
       \liminf_{k\to\infty} \|(u_k,v_k)\|^2
       \geq \|v_0\|_\omega^2 + \|(\tilde u^+,\tilde v^+)\|^2 +  \|(\tilde u^-,\tilde   v^-)\|^2
     \end{align*}
     and from the estimate \eqref{Mbrad Gl Energieabschaetzung tildeu tilde v} and \eqref{Gl für (u,v) in
     Nehari} we get
     \begin{align*}
       \kappa_b^*
       &= \lim_{k\to\infty} \kappa_b^*(R_k) \\
       &= \frac{q-1}{2q} \lim_{k\to\infty} \|(u_k,v_k)\|^2 \\
       &\geq \frac{q-1}{2q} \Big(\|v_0\|_\omega^2 + \|(\tilde u^+,\tilde v^+)\|^2 +  \|(\tilde u^-,\tilde
       v^-)\|^2\Big) \\
 	   &\geq (2+\omega^{\frac{q+1}{q-1}})c_0
 	 \end{align*}
     which contradicts \eqref{Gl Ungl kappab kappa-infty}. Hence, $u\neq 0$. Analogously the
     assumption $v=0$ leads to the inequality
     $$
       \kappa_b^*
       \geq (1+2\omega^{\frac{q+1}{q-1}})c_0
       \geq (2+\omega^{\frac{q+1}{q-1}})c_0,
     $$
     which again gives a contradiction. It follows $u,v\neq 0$ and the proof is finished. \qed
 \medskip
 \medskip

  {\it Proof of Corollary \ref{Mbrad Kor hinreichendes Krit Mbrad n=1}}
  \medskip

  Assume that $b$ is larger than the right hand side in \eqref{Mbrad Gl hinreichendes Krit Mbrad n=1}.
  According to Theorem \ref{Mbrad Thm Kompaktheit n=1} (iii) it suffices to show that this implies
  $b>b^*(\omega,q)$. To this end we estimate $b^*(\omega,q)$ from above using the test function
  $(u,v):=(u_0,u_0(\omega\cdot))$ in \eqref{Gl Def b*(omega,q)}. We obtain
     \begin{align*}
        b^*(\omega,q)
        &\leq \max_{\alpha>0} \frac{(2+\omega^{\frac{q+1}{q-1}})^{1-q} \|u_0\|^{-2q}\|u_0\|_{2q}^{2q}
        (\|u_0\|^2+\alpha^2\|u_0(\omega\cdot)\|_\omega^2)^q-
        \|u_0\|_{2q}^{2q}-\alpha^{2q}\|u_0(\omega\cdot)\|_{2q}^{2q}}{2\alpha^q \|u_0u_0(\omega\cdot)\|_q^q} \\
        &=\max_{\alpha>0}  \frac{(2+\omega^{\frac{q+1}{q-1}})^{1-q}( 1+\alpha^2\omega)^q -1 -
        \alpha^{2q}\omega^{-1}}{2\alpha^q}\cdot \frac{\|u_0\|_{2q}^{2q}}{\|u_0u_0(\omega\cdot)\|_q^q}.
     \end{align*}
  The numerator function is bounded from above by its negative maximum $2^{1-q}-1$ which is attained at
  $\alpha=2^{-1/2}\omega^{\frac{1}{q-1}}$.
  In particular, the right hand side is negative for all $\alpha>0$ so that the estimate
  ${\|u_0u_0(\omega\cdot)\|_q^q\leq
  \|u_0\|_{2q}^q\|u_0(\omega\cdot)\|_{2q}^q=\omega^{-1/2}\|u_0\|_{2q}^{2q}}$ leads to
     \begin{align*}
         b^*(\omega,q)
         &\leq \max_{\alpha>0}  \frac{(2+\omega^{\frac{q+1}{q-1}})^{1-q}( 1+\alpha^2\omega)^q -1 -
        \alpha^{2q}\omega^{-1}}{2\alpha^q\omega^{-1/2}}
   \end{align*}
   where the right hand side is smaller than $b$ by the assumption of Corollary \ref{Mbrad Kor hinreichendes
   Krit Mbrad n=1}. As indicated above the result now follows from Theorem \ref{Mbrad Thm Kompaktheit
   n=1} (iii).
   \medskip

   Finally, in the special cases $q=2$ or $1<q\leq 2,\omega=1$ we may determine the value of the
   right hand side in \eqref{Mbrad Gl hinreichendes Krit Mbrad n=1} explicitly. In case $q=2$
   the maximum is attained at $\alpha = \big(\frac{(1+\omega^3)\omega}{2}\big)^{1/4}$ and
   we get
   $$
     b^*(\omega,2)\leq -\frac{1}{\omega^{3/2}+\sqrt{2(1+\omega^3)}}.
   $$
   In case $1<q\leq 2,\omega=1$ the maximum is attained at $\alpha=1$ and we obtain the value
   $$
     b^*(1,q)\leq (\frac{2}{3})^{q-1}-1.
   $$
   \medskip

  \section{Proof of Theorem \ref{Mbrad Thm Nichtexistenz n=1 q leq 2}}

    In the proof of Theorem \ref{Mbrad Thm Nichtexistenz n=1 q leq 2} we will need the following elementary
    result.

   \begin{prop}\label{Mbrad Prop Identität n=1}
     Let $n=1,\omega\geq 1$. Then every solution $(u,v)\in H^1(\R)\times H^1(\R)$ of \eqref{Mbrad Gl DGL}
     satisfies
     \begin{align}\label{Mbrad Gl Identitaet n=1}
       - u'^2 - v'^2 + u^2 + \omega^2 v^2
       - \frac{1}{q} (|u|^{2q}+|v|^{2q} + 2b|u|^q|v|^q) = 0 \qquad \text{in } \R.
     \end{align}
   \end{prop}
   \begin{proof}
     For a solution $(u,v)$  of \eqref{Mbrad Gl DGL} the derivative of the left hand side in \eqref{Mbrad Gl
     Identitaet n=1} exists and equals zero. Hence there is some $\alpha\in\R$ such that
     $$
       - u'^2 - v'^2 + u^2 + \omega^2 v^2
       - \frac{1}{q}(|u|^{2q}+|v|^{2q} + 2b|u|^q|v|^q) = \alpha \qquad \text{in } \R.
     $$
     If $\alpha$ were not equal to zero then there would exist $\delta>0$ such that
     $u'^2+v'^2+u^2+\omega^2 v^2 \geq \delta$ in $\R$ which contradicts $u,v\in H^1(\R)$.
   \end{proof}

   {\it Proof of Theorem \ref{Mbrad Thm Nichtexistenz n=1 q leq 2}}
   \medskip

    Let $b\in\R$ satisfy the inequality \eqref{Gl Nichtexistenzkriterium n=1}.
    We assume that there is a fully nontrivial solution $(u,v)\in H^1(\R)\times H^1(\R)$ of \eqref{Mbrad Gl
    DGL}. Since the functions $(-u,v),(u,-v),(-u,-v)$ solve \eqref{Mbrad Gl DGL}, too, we may assume that a
    maximal open interval $A\subset \{x\in\R: u(x)>0,v(x)>0\}$ is non-empty. We will prove later that the
    assumptions of the theorem imply that every critical point of $u^\omega v$ in $A$ is strict local minimizer.
    Once this is shown a contradiction can be achieved in the following way.
    \medskip

    In case $u^\omega v$ does not have any critical point in $A$ the function $u^\omega v$ is monotone on $A$
    so that $A$ is unbounded and $(u^\omega v)(x)$ does not converge to 0 as $|x|\to\infty$. This contradicts
    $u,v\in H^1(\R)$. If, however, a critical point $x_0\in A$ exists, then $x_0$ is a strict local
    minimizer and therefore it must be the only critical point because any other critical point would have
    to be a strict local minimizer, too. It follows that $u^\omega v$ is increasing on $(x_0,\infty)\cap A$
    and decreasing on $(-\infty,x_0)\cap A$ so that $(u^\omega v)(x)\geq (u^\omega v)(x_0)>0$ for all $x\in
    A$. Hence, $A=\R$ from the maximality of $A$ and thus $(u^\omega v)(x)\geq (u^\omega v)(x_0)>0$ for all
    $x\in\R$ which contradicts $u,v\in H^1(\R)$.
    \medskip

    Now we show that every critical point of $u^\omega v$ in $A$ is a strict local minimizer.
    Clearly, for $x\in A$ such that $(u^\omega v)'(x)=0$ we have
    \begin{equation} \label{Mbrad n=1 Gl 1}
      \omega u'(x)v(x) = -u(x)v'(x)
    \end{equation}
    and a short calculation gives
    \begin{equation} \label{Mbrad n=1 Gl 2}
      u'(x)v'(x) = -  \omega u(x)v(x) \cdot \frac{u'(x)^2+v'(x)^2}{u(x)^2+\omega^2 v(x)^2}.
    \end{equation}
    Using \eqref{Mbrad n=1 Gl 1} we obtain at the point $x$
    \begin{align*}
      (u^\omega v)''
      &= \omega(\omega-1) u^{\omega-2}u'^2 v + \omega u'' u^{\omega-1} v + 2\omega u'u^{\omega-1}v' + u^\omega v''
      \\
      &= u^{\omega-1}\Big(\omega\frac{u'}{u}\cdot (\omega-1)u'v + \omega u'' v + 2\omega u'v' + uv''\Big) \\
      &= u^{\omega-1}\Big( -(\omega-1)u'v' + \omega u'' v +
      + 2\omega u'v' + uv''\Big)
      \\
      &= u^{\omega-1}\Big((\omega+1)u'v' + \omega u'' v + uv''\Big).
      \intertext{
    From \eqref{Mbrad n=1 Gl 2} and the partial differential equation satisfied by $(u,v)$ we get}
      (u^\omega v)''
      &= u^{\omega-1}\Big(
      -\omega(\omega+1)uv \cdot \frac{u'^2+v'^2}{u^2+\omega^2 v^2}  \\
      &\qquad\quad + \omega
      uv(1-u^{2q-2}-bu^{q-2}v^q) + uv(\omega^2 -v^{2q-2}-bv^{q-2}u^q) \Big).
       \intertext{Proposition \ref{Mbrad Prop Identität n=1} gives}
      (u^\omega v)''
      &=
      u^{\omega}v\Big( -\omega(\omega+1)\cdot \Big( 1 -
      \frac{u^{2q}+v^{2q}+2bu^qv^q}{q(u^2+\omega^2 v^2)}\Big) \\
      &\qquad\quad  + \omega - \omega u^{2q-2}-b\omega u^{q-2}v^q +  \omega^2 -v^{2q-2}-bv^{q-2}u^q \Big) \\
      &= \frac{u^\omega v}{q(u^2+\omega^2 v^2)}\cdot  \Big( \omega(\omega+1)
      (u^{2q}+v^{2q}+2bu^qv^q)  -  q(u^2+\omega^2 v^2)\cdot \\
      &\qquad\qquad\qquad\qquad  \big( \omega u^{2q-2} +b\omega u^{q-2}v^q + v^{2q-2} + bv^{q-2}u^q\big)\Big)
      \\
      &=  \frac{u^\omega v}{q(u^2+\omega^2 v^2)}  \cdot \Big(
        -b q u^{q+2}v^{q-2}
      + (\omega^2-(q-1)\omega) u^{2q}
      - q u^2v^{2q-2} \\
      &\qquad\qquad\qquad\qquad
      +b(2-q)(\omega^2+\omega) u^qv^q
      -q\omega^3 u^{2q-2}v^2 \\
      &\qquad\qquad\qquad\qquad
      - (\omega^2(q-1)-\omega) v^{2q}
      - bq\omega^3 u^{q-2}v^{q+2} \Big) \\
      &=  \frac{u^\omega v^{2q+1}}{q(u^2+\omega^2 v^2)}  \cdot \Big(
        -b q z^{q+2}
      + (\omega^2-(q-1)\omega) z^{2q}
      - q z^2 \\
      &\qquad\qquad\qquad\qquad
      +b(2-q)(\omega^2+\omega) z^q
      -q\omega^3 z^{2q-2} \\
      &\qquad\qquad\qquad\qquad
      - (\omega^2(q-1)-\omega)
      - bq\omega^3 z^{q-2} \Big)
    \end{align*}
    where $z:= \frac{u(x)}{v(x)}$. From assumption \eqref{Gl Nichtexistenzkriterium n=1} and $u(x),v(x),z>0$
    we obtain $(u^\omega v)''(x)>0$ which proves the claim. We finish the proof of
    Theorem \ref{Mbrad Thm Nichtexistenz n=1 q leq 2} considering the special cases $q=2$ and
    $1<q<2,\omega=1$.
    \medskip

    In case $q=2$ the minimum in \eqref{Gl Nichtexistenzkriterium n=1} is attained at $z=\sqrt{\omega}$ and we
    obtain that fully nontrivial solutions do not exist for parameter values
    $b < - \frac{\omega^2+1}{2\omega}$.
    In case $1<q<2,\omega=1$ we find that the following inequality holds for $b\leq -1$ and all $z>0$
    \begin{align*}
      (-b)\cdot \big(q z^{q+2} &-2(2-q)z^q + q z^{q-2}\big) + (2-q) z^{2q}  - q z^2
      - q z^{2q-2}  - (q-2) \\
      &\geq 1\cdot \big(q z^{q+2} -2(2-q)z^q + q z^{q-2}\big) + (2-q) z^{2q}  - q z^2
      - q z^{2q-2}  - (q-2) \\
      &= (2-q)(z^q-1)^2 + q(z^2-z^{q-2})(z^q-1) \\
      &\geq 0
    \end{align*}
    with equality if and only if $b=-1$ and $z=1$. Rearranging terms we see that the minimum in
    \eqref{Gl Nichtexistenzkriterium n=1} is 1 and it is attained at $z=1$. We obtain the nonexistence result
    for $b<-1$.\qed

  \begin{bem}
    In the above reasoning we did not use the assumption $1<q\leq 2$ explicitly. Nevertheless we had
    to exclude the case $q>2$ case because the minimum in \eqref{Gl Nichtexistenzkriterium n=1} does not
    exist. Indeed, sending $z$ to $0$ and using $\omega\geq 1>\frac{1}{q-1}$ we find that the infimum is
    $-\infty$.
  \end{bem}

\bibliographystyle{plain}
\bibliography{pub}

\end{document}